\definecolor{mygreen}{rgb}{0, 0.4, 0}
\definecolor{myblue}{rgb}{0, 0, 0.5}
\newcommand{\Out}{\mathop{\mathrm{Out}}}
\newcommand{\cd}{\mathop{\mathrm{cd}}\nolimits}
\newcommand{\lk}{\mathop{\mathrm{lk}}\nolimits}
\newcommand{\CV}{\mathbf{W}}
\newcommand{\Mod}{\mathop{\mathrm{Mod}}\nolimits}
\renewcommand{\epsilon}{\varepsilon}
\newcommand{\Homeo}{\mathrm{Homeo}}
\newcommand{\Sp}{\mathrm{Sp}}
\newcommand{\SL}{\mathrm{SL}}
\newcommand{\cur}{\mathcal{C}}
\newcommand{\W}{\mathcal{W}}
\newcommand{\CH}{\mathcal H}
\newcommand{\Stab}{\mathop{\mathrm{Stab}}\nolimits}
\newcommand{\T}{\mathcal{T}}
\newcommand{\bT}{\mathbf{T}}
\newcommand{\I}{\mathcal{I}}
\newcommand{\B}{\mathcal{B}}
\newcommand{\M}{\mathcal{M}}
\newcommand{\K}{\mathcal{K}}
\newcommand{\Q}{\mathbb{Q}}
\newcommand{\Z}{\mathbb{Z}}
\newcommand{\R}{\mathbb{R}}
\newcommand{\iA}{\mathcal{A}}
\newcommand{\rank}{\mathop{\mathrm{rank}}}
\newtheorem{theorem}{Theorem}[section] 
\newtheorem{propos}[theorem] {Proposition}
\newtheorem{cor}[theorem] {Corollary}
\newtheorem{lem}[theorem]{Lemma}
\newtheorem{quest}[theorem]{Question}
\theoremstyle{definition}
\newtheorem{remark}[theorem]{Remark}
\numberwithin{equation}{section}
\author{Alexander A. Gaifullin}
\address{Steklov Mathematical Institute of Russian Academy of Sciences, Moscow, Russia}
\address{Skolkovo Institute of Science and Technology, Skolkovo, Russia}
\address{Lomonosov Moscow State University, Moscow, Russia}
\address{Institute for Information Transmission Problems (Kharkevich Institute), Moscow, Russia}
\email{agaif@mi-ras.ru}
\thanks{}
\title[On the top homology group of Johnson kernel]{On the top homology group of Johnson kernel}
\date{}
\subjclass[2010]{20F34 (Primary); 57M07, 20J05 (Secondary)}
\begin{document}

\maketitle

\begin{abstract}
The action of the mapping class group~$\Mod_g$ of an oriented surface~$\Sigma_g$ on the lower central series of~$\pi_1(\Sigma_g)$ defines the descending filtration in~$\Mod_g$ called the \textit{Johnson filtration}. The first two terms of it are the \textit{Torelli group}~$\I_g$ and the \textit{Johnson kernel}~$\K_g$. By a fundamental result of Johnson (1985), $\K_g$ is the subgroup of~$\Mod_g$ generated by all Dehn twists about separating curves. In 2007, Bestvina, Bux, and Margalit showed the group $\K_g$ has cohomological dimension~$2g-3$. We prove that the top homology group~$H_{2g-3}(\K_g)$ is not finitely generated. In fact, we show that it contains a free abelian subgroup of infinite rank, hence, the vector space~$H_{2g-3}(\K_g,\Q)$ is infinite-dimensional. Moreover, we prove that $H_{2g-3}(\K_g,\Q)$ is not finitely generated as a module over the group ring~$\Q[\I_g]$.
\end{abstract}

\section{Introduction}

Let $\Sigma_g$ be an oriented closed surface of genus~$g$. Recall that the \textit{mapping class group} of~$\Sigma_g$ is the group
$$
\Mod_g=\pi_0\bigl(\Homeo^+(\Sigma_g)\bigr),
$$
where $\Homeo^+(\Sigma_g)$ is the group of orientation preserving homeomorphisms of~$\Sigma_g$ on itself. 

Let $\Gamma=\pi_1(\Sigma_g)$ and let 
$$
\Gamma=\Gamma_0\supseteq\Gamma_1\supseteq\Gamma_2\supseteq\cdots
$$
be the lower central series of~$\Gamma$, that is, the series of subgroups inductively defined by $\Gamma_0=\Gamma$ and $\Gamma_{k+1}=[\Gamma,\Gamma_k]$. The mapping class group~$\Mod_g$ acts by outer automorphisms of~$\Gamma$. All subgroups~$\Gamma_k$ are characteristic, i.\,e., invariant under all automorphisms of~$\Gamma$. The \textit{Johnson filtration} is, by definition the decreasing sequence of subgroups
\begin{gather*}
\Mod_g\supseteq\I_g(1)\supseteq\I_g(2)\supseteq\I_g(3)\supseteq\cdots,\\
\I_g(k)=\ker\bigl(\Mod_g\to\Out(\Gamma/\Gamma_k)\bigr).
\end{gather*} 

The most important of the groups~$\I_g(k)$ are the \textit{Torelli group}~$\I_g=\I_g(1)$ and the \textit{Johnson kernel}~$\K_g=\I_g(2)$. Since $\Gamma/\Gamma_1=H_1(\Sigma_g)$, we see that $\I_g$ is exactly the subgroup of~$\Mod_g$ consisting of all mapping classes that act trivially on the homology of~$\Sigma_g$. Equivalently, $\I_g$ is the kernel of the natural  surjective homomorphism $\Mod_g\twoheadrightarrow\Sp(2g,\Z)$. 

The group~$\K_g$ can be equivalently described as the kernel of the surjective \textit{Johnson homomorphism} (see~\cite{Joh80}) 
\begin{equation}\label{eq_Johnson}
\tau\colon\I_g\twoheadrightarrow U=\wedge^3H_1(\Sigma_g)/H_1(\Sigma_g).
\end{equation}
Here $H_1(\Sigma_g)$ is embedded into~$\wedge^3H_1(\Sigma_g)$ by $x\mapsto x\wedge\Omega$, where $\Omega\in\wedge^2H_1(\Sigma_g)$ is the inverse of the intersection form. By a fundamental result of Johnson~\cite{Joh85a} (cf.~\cite{Put09}), $\K_g$ is precisely the subgroup of $\Mod_g$ generated by all Dehn twists about separating simple closed curves.

It is a classical result due to Dehn that the group~$\I_1$ is trivial. The group~$\I_2=\K_2$ was shown to be not finitely generated by McCullough and Miller~\cite{MCM86}; Mess~\cite{Mes92} proved that it is an infinitely generated free group. Johnson~\cite{Joh83} showed that for $g\ge 3$, the group~$\I_g$ is finitely generated, and described explicitly a finite set of generators. The question of whether the group~$\K_g$ is finitely generated turned out to be more complicated. Only recently Ershov and Sue He~\cite{ErHe17} proved that  $\K_g$ is finitely generated, provided that $g\ge12$. Then this result was extended to any genus $g\ge 4$ by Church, Ershov, and Putman~\cite{CEP17}. It is still unknown whether $\K_3$ is finitely generated. Also it is unknown for all $g\ge 3$ whether~$\I_g$ and~$\K_g$ are finitely presented.

A natural problem is to study the homology of the groups~$\I_g$ and~$\K_g$, $g\ge 3$. To simplify notation, we omit the symbol~$\Z$ in notation for homology and cohomology with integral coefficients. Below is the list of some known up to now facts on the homology of~$\I_g$ and~$\K_g$, $g\ge 3$:
\begin{itemize}
\item The group $H_1(\I_g)$, i.\,e., the abelianization of~$\I_g$ was computed explicitly by Johnson~\cite{Joh85b}. 
\item The group $H_1(\K_g)$, i.\,e., the abelianization of~$\K_g$ was shown to be finitely generated for $g\ge 4$ by Dimca and Papadima~\cite{DiPa13}. The rational homology group~$H_1(\K_g,\Q)$ was computed explicitly for $g\ge 6$ by Morita, Sakasai, and Suzuki~\cite{MSS17} using the description due to Dimca, Hain, and Papadima~\cite{DHP14}. The torsion of~$H_1(\K_g)$ is still not described explicitly.
\item The total rational homology groups $H_*(\I_g,\Q)=\bigoplus_{k\ge 0}H_k(\I_g,\Q)$, where $g\ge 7$, and  $H_*(\K_g,\Q)=\bigoplus_{k\ge 0}H_k(\K_g,\Q)$, where $g\ge 2$, were shown to be infinite-dimensional by Akita~\cite{Aki01}.
\item The groups~$H_3(\I_3)$ (Johnson, Millson, cf.~\cite{Mes92}) and $H_4(\I_3)$ (Hain~\cite{Hai02}) were shown to be not finitely generated. 
\item Bestvina, Bux, and Margalit~\cite{BBM07} computed the cohomological dimensions $\cd(\I_g)=3g-5$, $\cd(\K_g)=2g-3$, and showed that the top homology group $H_{3g-5}(\I_g)$ is not finitely generated. (In fact, their proof implies that $H_{3g-5}(\I_g)$ contains a free abelian subgroup of infinite rank.)
\item The author~\cite{Gai18} proved that for any pair $g$ and~$k$ satisfying $g\ge 3$ and $2g-3\le k\le 3g-6$ the group~$H_k(\I_g)$ contains a free abelian subgroup of infinite rank, hence, is not finitely generated.
\item Kassabov and Putman~\cite{KaPu18} showed that the group $H_2(\I_g)$ is finitely generated as an $\Sp(2g,\Z)$-module for all $g\ge 3$.
\end{itemize}

The computation of the cohomological dimensions of~$\I_g$ and~$\K_g$ and the proof that the top homology group~$H_{3g-5}(\I_g)$ is not finitely generated were obtained by Bestvina, Bux, and Margalit~\cite{BBM07} by studying the action of~$\I_g$ and~$\K_g$ on a special contractible cell complex~$\B_g$, which was called  the \textit{complex of cycles}. However, they did not manage to prove that the top homology group of~$\K_g$ is not finitely generated, too. This was also explicitly asked as Question~5.10 in~\cite{Mar18}. The main result of the present paper is the following answer to this question.

\begin{theorem}\label{theorem_main}
Suppose that  $g\ge 3$. Then the group $H_{2g-3}(\K_g)$ contains a free abelian subgroup of infinite rank \textnormal{(}hence, is not finitely generated\textnormal{)}. Equivalently, the vector space $H_{2g-3}(\K_g,\Q)$ is infinite-dimensional.
\end{theorem}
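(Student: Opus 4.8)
The plan is to exhibit an explicit infinite family of linearly independent classes in $H_{2g-3}(\K_g)$ by combining the \emph{abelian cycle} construction with the action of $\K_g$ on the Bestvina--Bux--Margalit complex of cycles. Recall that $\K_g$ is generated by Dehn twists about separating curves, and that twists about disjoint curves commute; so any collection of $2g-3$ disjoint, pairwise non-isotopic separating curves yields a free abelian subgroup $\Z^{2g-3}\le\K_g$. Since $\cd(\K_g)=2g-3$, this is a subgroup of top rank, and the image of the fundamental class of the torus under the map $H_{2g-3}(\Z^{2g-3})\to H_{2g-3}(\K_g)$ induced by inclusion is a candidate top class. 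The whole theorem then reduces to producing infinitely many such tori whose fundamental classes are $\Z$-linearly independent in $H_{2g-3}(\K_g)$.

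For the construction I would fix a genus-$2$ subsurface $R\subset\Sigma_g$ bounded by separating curves and choose a maximal system of $2g-4$ disjoint separating curves $\delta_1,\dots,\delta_{2g-4}$ in the complement of $R$; these generate a subgroup $\Z^{2g-4}\le\K_g$ supported off $R$. Inside $R$ there are infinitely many pairwise non-isotopic separating curves $\gamma_1,\gamma_2,\dots$ of $\Sigma_g$ (each bounding a one-holed torus in $R$), every one of which is disjoint from all the $\delta_i$. Hence for each $n$ the twists $T_{\gamma_n},T_{\delta_1},\dots,T_{\delta_{2g-4}}$ generate a subgroup $A_n\cong\Z^{2g-3}$, and I obtain a sequence of top classes $\theta_n\in H_{2g-3}(\K_g)$ that differ only in the ``$R$-coordinate''. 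The reason there is room for infinitely many of these ultimately reflects the infinite generation of the genus-$2$ Torelli group (McCullough--Miller, Mess), which is what makes the family inside $R$ inexhaustible.

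Since the complex of cycles $\B$ is contractible and $\K_g$-equivariant, we have $H_*(\K_g)\cong H_*^{\K_g}(\B)$, and the isotropy spectral sequence
$$
E^1_{p,q}=\bigoplus_{[\sigma]} H_q\bigl(\Stab_{\K_g}\sigma\bigr)\Longrightarrow H_{p+q}(\K_g),
$$
with $[\sigma]$ running over $\K_g$-orbits of $p$-cells, computes the target. The detection step is to realize each $\theta_n$ as a well-defined class in the top total degree $p+q=2g-3$ of this spectral sequence and to prove that distinct $\theta_n$ remain linearly independent on the $E^\infty$-page, i.e.\ that no differential and no identification among orbits can collapse infinitely many of them. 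I expect \emph{this} to be the main obstacle, and the essential new difficulty compared with the Torelli case treated by Bestvina, Bux, and Margalit: there the relevant cell stabilizers are Torelli groups of the cut-open surface, whereas for $\K_g$ the stabilizers are the corresponding \emph{Johnson kernels} of the complementary subsurfaces, whose homology in the relevant degrees is considerably harder to control. Carrying out the combinatorial bookkeeping so that the top cells carrying $\theta_n$ have stabilizers of the expected cohomological dimension, and verifying that the curves $\gamma_n$ produce genuinely different orbits, is the technical heart of the argument.

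Once the $\theta_n$ are shown to span a free abelian subgroup of infinite rank in $H_{2g-3}(\K_g)$, tensoring with $\Q$ gives that $H_{2g-3}(\K_g,\Q)$ is infinite-dimensional. For the final assertion that it is not finitely generated as a module over $\Q[\I_g]$, I would refine the construction so that the $\gamma_n$ lie in infinitely many distinct $\I_g$-orbits (using that $\I_g$ acts on the isotopy classes of separating curves with infinitely many orbits once the bounding data is remembered); then the cyclic $\Q[\I_g]$-submodules generated by the $\theta_n$ together exhaust infinitely many independent directions, which is incompatible with finite generation over $\Q[\I_g]$.
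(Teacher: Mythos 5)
Your skeleton matches the paper's in two respects: the candidate classes are abelian cycles on $2g-3$ commuting separating twists, and the tool for separating contributions is the Cartan--Leray (isotropy) spectral sequence for the action of $\K_g$ on the Bestvina--Bux--Margalit complex of cycles, using the bound $\cd\bigl(\Stab_{\K_g}(P_M)\bigr)\le 2g-3-\dim P_M$ to get an injection of $\bigoplus_M H_{2g-3}\bigl(\Stab_{\K_g}(M)\bigr)$ into $H_{2g-3}(\K_g)$ over vertices $M$ in distinct $\K_g$-orbits. But the step you defer as ``the main obstacle'' --- showing that the classes are nonzero and stay independent --- is precisely the content of the theorem, and you supply no idea for it. Nothing in the spectral sequence tells you that the image of an abelian cycle in $H_{2g-3}\bigl(\Stab_{\K_g}(M)\bigr)$ is nonzero; the stabilizer is essentially a Johnson kernel of a cut-open surface and is no easier than $\K_g$ itself. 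The paper's missing ingredient is Morita's Casson-invariant homomorphisms: for suitable Heegaard embeddings $f_{p,q}\colon\Sigma_g\hookrightarrow S^3$ one gets homomorphisms $\lambda_{p,q}\colon\K_g\to\Z$ whose values on the twists $T_{\delta_i},T_{\epsilon_j}$ are computable from Seifert linking forms, and the product $\Phi=\prod\lambda_{p,q}\in H^{2g-3}(\K_g)$ pairs with $\iA_0$ to give $(-1)^{g-1}2^{g-2}\ne 0$. Without this (or some substitute detection device) your $\theta_n$ could all vanish.

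Your mechanism for producing infinitely many classes also diverges from the paper's and has problems of its own. All your configurations $\gamma_n\cup\delta_1\cup\cdots$ are topologically equivalent, and abelian cycles on conjugate tuples are \emph{equal} in $H_{2g-3}(\K_g)$ (inner automorphisms act trivially on group homology); so you must at minimum arrange the $\gamma_n$ to lie in distinct $\K_g$-orbits relative to the $\delta_i$, and then still detect independence. Moreover the vertices of $\B_g(x)$ are multicurves of non-separating curves supporting basic $1$-cycles, so to invoke the injection you must choose, for each $n$, such a multicurve disjoint from $\gamma_n$ and all $\delta_i$, and prove these vertices lie in distinct $\K_g$-orbits --- none of which is addressed. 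The paper avoids all of this by taking a \emph{single} abelian cycle $\iA_0$ supported at one vertex $M=\alpha_1\cup\cdots\cup\alpha_g$ and translating it by elements $h\in\I_g$; Lemma~4.1 shows $V=\tau\bigl(\Stab_{\I_g}(M)\bigr)$ has infinite index in $U$, so infinitely many translates $h(M)$ lie in distinct $\K_g$-orbits, distinguished by the Johnson homomorphism. (A smaller point: a genus-$(g-2)$ complementary subsurface carries at most $2g-5$ disjoint pairwise non-isotopic separating curves of $\Sigma_g$, not $2g-4$, so your $A_n$ as described has rank $2g-4$; you would need two curves inside $R$.)
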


Since $\K_g$ is a normal subgroup of~$\I_g$, the group~$\I_g$ acts on the homology of~$\K_g$. Since $\K_g$ acts trivially on homology of itself, this action reduces to the action of the group $U\cong \I_g/\K_g$ defined by~\eqref{eq_Johnson}. Theorem~\ref{theorem_main} can be strengthened in the following way. 

\begin{theorem}\label{theorem_main2}
Suppose that  $g\ge 3$. Then  $H_{2g-3}(\K_g,\Q)$ is not finitely generated as a~$\Q[\I_g]$-module \textnormal{(}equivalently, as a~$\Q[U]$-module\textnormal{)}.
\end{theorem}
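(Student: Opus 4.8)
The plan is to reduce finite generation over $\Q[U]$ to a statement about coinvariants, and then to separate infinitely many of the classes from Theorem~\ref{theorem_main} by a $U$-invariant ``symplectic type''. The reduction rests on an elementary fact: if $M$ is finitely generated over a commutative ring $R$ and $I\subseteq R$ is an ideal, then $M/IM$ is finitely generated over $R/I$. Taking $R=\Q[U]$, taking $I$ to be the augmentation ideal, and noting $R/I=\Q$, finite generation of $M=H_{2g-3}(\K_g,\Q)$ would force the coinvariants $M_U=M\otimes_{\Q[U]}\Q=M/IM$ to be finite-dimensional. Because $\K_g$ acts trivially on its own homology, the $\Q[\I_g]$-action factors through $\Q[U]$, so the two finite-generation statements agree; it therefore suffices to prove that $M_U=H_0\bigl(U;H_{2g-3}(\K_g,\Q)\bigr)$ is infinite-dimensional.

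Recall next the classes realizing the dimension $2g-3$. A system $C=\{\gamma_1,\dots,\gamma_{2g-3}\}$ of disjoint separating curves cutting $\Sigma_g$ into $g$ one-holed tori and $g-2$ pairs of pants yields $2g-3$ commuting independent separating twists, spanning $\Z^{2g-3}\subseteq\K_g$; the image of the fundamental class of the corresponding torus is an abelian cycle $A(C)\in H_{2g-3}(\K_g)$. Each such $C$ induces an orthogonal symplectic splitting $H_1(\Sigma_g)=V_1\perp\cdots\perp V_g$ into the rank-two summands carried by the one-holed tori. Since $\I_g$ acts trivially on $H_1(\Sigma_g)$, this splitting is constant on $\I_g$-orbits, hence on $U$-orbits; so it is exactly the kind of invariant that can distinguish classes in the coinvariants. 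The crucial point for Theorem~\ref{theorem_main2}, as opposed to Theorem~\ref{theorem_main}, is that I would choose the family so that infinitely many of the $C$ realize \emph{pairwise distinct} splittings (these lie in a single infinite $\Sp(2g,\Z)$-orbit and cannot be matched by the $U$-action), while their cycles $A(C)$ remain linearly independent.

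The heart of the matter is to upgrade the assignment $C\mapsto(\text{splitting of }C)$ to an honest $U$-invariant homomorphism
\begin{equation*}
\Phi\colon H_{2g-3}(\K_g,\Q)\longrightarrow \Q[\mathcal D],
\end{equation*}
where $\mathcal D$ is the set of orthogonal rank-two symplectic splittings (with trivial $U$-action), sending $A(C)$ to the basis vector indexed by its splitting. As $U$ acts trivially on the target, $\Phi$ kills $IM$ and factors through $M_U$; if the chosen splittings are pairwise distinct, the vectors $\Phi(A(C))$ are distinct basis elements, so the image, and hence $M_U$, is infinite-dimensional. A tempting alternative is the $U$-invariant map $i_*\colon H_{2g-3}(\K_g,\Q)\to H_{2g-3}(\I_g,\Q)$ induced by inclusion; but the separating-twist cycles may well die in~$\I_g$, so I expect the splitting map, rather than $i_*$, to be the workable invariant.

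The main obstacle is the well-definedness of $\Phi$: the splitting is defined on chosen cycles, and one must show it descends through boundaries to a functional on homology. I would establish this using the contractible complex of cycles~$\B_g$, presenting $H_{2g-3}(\K_g)$ through its top cells and their $\K_g$-stabilizers, and checking that the boundary maps in the relevant degrees respect the symplectic type, so that no relation mixes cells belonging to different splittings. Proving this non-mixing, and the consequent independence of the infinitely many type vectors, is where essentially all the work resides; granting it, the two short algebraic steps above yield Theorem~\ref{theorem_main2}.
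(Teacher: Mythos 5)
There is a genuine gap, and it sits exactly where you locate ``essentially all the work'': the existence of the $U$-invariant map $\Phi\colon H_{2g-3}(\K_g,\Q)\to\Q[\mathcal D]$. The available tool (and the one the paper uses), namely the Cartan--Leray spectral sequence for the action of~$\K_g$ on the complex of cycles~$\B_g(x)$, produces an \emph{injection} $\bigoplus_M H_{2g-3}\bigl(\Stab_{\K_g}(M)\bigr)\hookrightarrow H_{2g-3}(\K_g)$ for vertices~$M$ in distinct $\K_g$-orbits, because $E^{\infty}_{0,2g-3}=E^1_{0,2g-3}$ is the \emph{bottom} piece of the filtration of~$H_{2g-3}$, i.e.\ a subgroup; it does not produce a retraction of $H_{2g-3}(\K_g)$ onto an orbit-indexed direct sum, which is what a globally defined~$\Phi$ would require. (Note also that the complex depends on a choice of $x\in H_1(\Sigma_g)$, and only finitely many splittings at a time can be arranged to be visible as vertices for a single~$x$; the paper's Lemma~\ref{lem_generic} exists precisely to handle this.) Without $\Phi$ defined on the whole module, your coinvariant reduction collapses: knowing that a \emph{submodule} $N\subseteq M$ has infinite-dimensional coinvariants says nothing about~$M_U$, since coinvariants are only right exact and $N_U\to M_U$ need not be injective. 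You have therefore replaced the theorem by the strictly stronger, and unproved, claim that $H_0\bigl(U;H_{2g-3}(\K_g,\Q)\bigr)$ is infinite-dimensional.

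The paper avoids this entirely: it proves (Theorem~\ref{theorem_Abelian}) that the classes~$\iA_s$ generate a \emph{free} $\Q[U]$-submodule of infinite rank --- using exactly the injection above, applied to vertex orbits separated by the splitting data and by the Johnson homomorphism --- and then observes that $\Q[U]$ is a Laurent polynomial ring over~$\Q$ in finitely many variables, hence Noetherian, so a finitely generated module cannot contain an infinitely generated free submodule. If you want to keep your architecture, the fix is to drop the passage to coinvariants and argue directly with the submodule plus Noetherianity; your ``non-mixing'' intuition is then precisely the injectivity statement of Proposition~\ref{propos_embed}.
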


It is standard to use additive notation for the group~$U$. Hence, denoting the element of the group ring~$\Q[U]$  corresponding to an element~$\theta\in U$ again by~$\theta$, we would arrive to a confusion. We shall conveniently denote the element of~$\Q[U]$   corresponding to an element~$\theta\in U$ by a symbol~$t^{\theta}$. Then $t^0=1$ is the unit of the ring~$\Q[U]$  and $t^{\theta_1+\theta_2}=t^{\theta_1}t^{\theta_2}$ for all $\theta_1,\theta_2\in U$. The same notation will be used for~$\Z[U]$.

Now, let us describe explicitly an infinite set of linearly independent homology classes in~$H_{2g-3}(\K_g)$. (We say that elements of an abelian group are \textit{linearly independent} if they satisfy no non-trivial linear relation with integral coefficients, that is, if they form a basis of a free abelian subgroup.)

Recall the definition of an \textit{abelian cycle}. Let $h_1,\ldots,h_k$ be pairwise commuting elements of a group~$G$. Consider the homomorphism
$\chi\colon\Z^k\to G$ that sends the generator of the $i$th factor~$\Z$ to~$h_i$ for every~$i$. We put~$\iA(h_1,\ldots,h_k)=\chi_*(\mu_k)$, where $\mu_k$ is the standard generator of~$H_k(\Z^k)\cong\Z$. Homology classes $\iA(h_1,\ldots,h_k)$  are called \textit{abelian cycles}. 

We denote by~$T_{\gamma}$ the left Dehn twist about a simple closed curve~$\gamma$.

Throughout the whole paper we suppose that $g\ge 3$ and identify~$\Sigma_g$ with the surface shown in Fig.~\ref{figure_delta_epsilon}. Consider the system of simple closed curves $\delta_1,\ldots,\delta_g,\epsilon_2,\ldots,\epsilon_{g-2}$ in~$\Sigma_g$ shown in this figure. These simple closed curves are separating and pairwise disjoint. Therefore, the Dehn twists $T_{\delta_1},\ldots,T_{\delta_g},T_{\epsilon_2},\ldots,T_{\epsilon_{g-2}}$ belong to~$\K_g$ and pairwise commute. We consider the abelian cycle
\begin{equation}\label{eq_A}
\iA_0=\iA\left(T_{\delta_1},\ldots,T_{\delta_g},T_{\epsilon_2},\ldots,T_{\epsilon_{g-2}}\right)\in H_{2g-3}(\K_g).
\end{equation}
Further we shall show that $\iA_0\ne 0$, see Proposition~\ref{propos_infinite}. Hence $T_{\delta_1},\ldots,T_{\delta_g},T_{\epsilon_2},\ldots,T_{\epsilon_{g-2}}$ generate a free abelian subgroup of rank~$2g-3$ in~$\K_g$. Notice that by a result of Vautaw~\cite{Vau02} the Torelli group~$\I_g$, hence, the Johnson kernel~$\K_g$ does not contain a free abelian subgroup of rank greater than~$2g-3$.

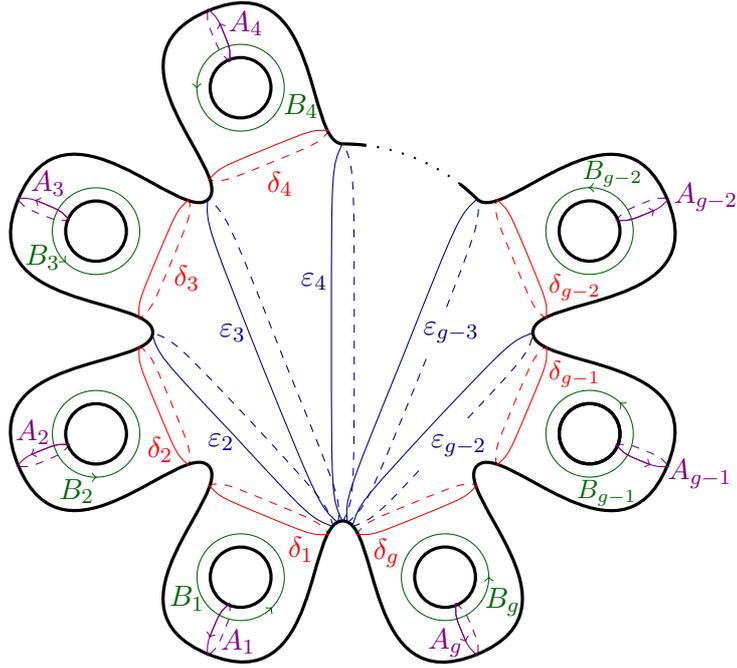
\begin{figure}

\begin{tikzpicture}[scale=2.5]
\small

\draw [myblue,dashed] (.05,-1.03) .. controls (.17,-.985) .. (.5645,-.554) .. controls (.969,-.123).. (1.005,0);

\path (.61,-.59) node [myblue,fill=white] {$\phantom{\epsilon_{g-2}}$};

\path (.61,-.6) node [myblue,fill=white] {$\epsilon_{g-2}$};

\draw [red,dashed] (0.071,-1.07) .. controls (0.128,-1.004) .. (0.374,-.9025) .. controls (0.62,-0.801) ..(0.707,-0.807);
\draw [red,dashed,rotate=45] (0.071,-1.07) .. controls (0.128,-1.004) .. (0.374,-.9025) .. controls (0.62,-0.801) ..(0.707,-0.807);
\draw [red,dashed,rotate=90] (0.071,-1.07) .. controls (0.128,-1.004) .. (0.374,-.9025) .. controls (0.62,-0.801) ..(0.707,-0.807);
\draw [red,dashed,rotate=-45] (0.071,-1.07) .. controls (0.128,-1.004) .. (0.374,-.9025) .. controls (0.62,-0.801) ..(0.707,-0.807);
\draw [red,dashed,rotate=-90] (0.071,-1.07) .. controls (0.128,-1.004) .. (0.374,-.9025) .. controls (0.62,-0.801) ..(0.707,-0.807);
\draw [red,dashed,rotate=-135] (0.071,-1.07) .. controls (0.128,-1.004) .. (0.374,-.9025) .. controls (0.62,-0.801) ..(0.707,-0.807);
\draw [red,dashed,rotate=180] (0.071,-1.07) .. controls (0.128,-1.004) .. (0.374,-.9025) .. controls (0.62,-0.801) ..(0.707,-0.807);

\draw [violet,dashed] (0.595,-1.436) .. controls (0.646,-1.484) .. (0.68,-1.5655) .. controls (0.714,-1.647) .. (0.711,-1.717) ;
\draw [violet,dashed,rotate=45] (0.595,-1.436) .. controls (0.646,-1.484) .. (0.68,-1.5655) .. controls (0.714,-1.647) .. (0.711,-1.717) ;
\draw [violet,dashed,rotate=90] (0.595,-1.436) .. controls (0.646,-1.484) .. (0.68,-1.5655) .. controls (0.714,-1.647) .. (0.711,-1.717) ;
\draw [violet,dashed,rotate=-45] (0.595,-1.436) .. controls (0.646,-1.484) .. (0.68,-1.5655) .. controls (0.714,-1.647) .. (0.711,-1.717) ;
\draw [violet,dashed,rotate=-90] (0.595,-1.436) .. controls (0.646,-1.484) .. (0.68,-1.5655) .. controls (0.714,-1.647) .. (0.711,-1.717) ;
\draw [violet,dashed,rotate=-135] (0.595,-1.436) .. controls (0.646,-1.484) .. (0.68,-1.5655) .. controls (0.714,-1.647) .. (0.711,-1.717) ;
\draw [violet,dashed,rotate=180] (0.595,-1.436) .. controls (0.646,-1.484) .. (0.68,-1.5655) .. controls (0.714,-1.647) .. (0.711,-1.717) ;

\draw [myblue,dashed] (0,-1.005) .. controls (.06,-0.88) .. (.06,0) .. controls (.06,0.88) .. (0,1.005);

\draw [myblue,dashed] (-.024,-1.01) .. controls (-.013, -.878).. (-.312,-.138) .. controls (-.611,0.622) ..  (-.71,0.71);

\draw [myblue,dashed] (.024,-1.01) .. controls (.123, -.922).. (.422,-.182) .. controls (.721,0.578) ..   (.71,0.71);

\draw [myblue,dashed] (-.05,-1.03) .. controls (-.086,-.907) .. (-.4805,-.476) .. controls (-.885,-.045).. (-1.005,0);

\draw [very thick] (0,-1) .. controls (.2,-1) and (0.107,-1.957) .. (0.707,-1.707) .. controls (1.307, -1.457) and (0.566,-0.848) .. (0.707,-0.707);
\draw  [very thick,rotate=45] (0,-1) .. controls (.2,-1) and (0.107,-1.957) .. (0.707,-1.707) .. controls (1.307, -1.457) and (0.566,-0.848) .. (0.707,-0.707);
\draw [very thick,rotate=90] (0,-1) .. controls (.2,-1) and (0.107,-1.957) .. (0.707,-1.707) .. controls (1.307, -1.457) and (0.566,-0.848) .. (0.707,-0.707);
\draw [very thick,rotate=-45] (0,-1) .. controls (.2,-1) and (0.107,-1.957) .. (0.707,-1.707) .. controls (1.307, -1.457) and (0.566,-0.848) .. (0.707,-0.707);
\draw [very thick,rotate=-90] (0,-1) .. controls (.2,-1) and (0.107,-1.957) .. (0.707,-1.707) .. controls (1.307, -1.457) and (0.566,-0.848) .. (0.707,-0.707);
\draw [very thick,rotate=-135] (0,-1) .. controls (.2,-1) and (0.107,-1.957) .. (0.707,-1.707) .. controls (1.307, -1.457) and (0.566,-0.848) .. (0.707,-0.707);
\draw [very thick,rotate=180] (0,-1) .. controls (.2,-1) and (0.107,-1.957) .. (0.707,-1.707) .. controls (1.307, -1.457) and (0.566,-0.848) .. (0.707,-0.707);
\draw [very thick] (0.538,-1.298) circle (.16);
\draw [very thick,rotate=45] (0.538,-1.298) circle (.16);
\draw [very thick,rotate=45] (0.538,-1.298) circle (.16);
\draw [very thick,rotate=90] (0.538,-1.298) circle (.16);
\draw [very thick,rotate=-45] (0.538,-1.298) circle (.16);
\draw [very thick,rotate=-90] (0.538,-1.298) circle (.16);
\draw [very thick,rotate=-135] (0.538,-1.298) circle (.16);
\draw [very thick,rotate=180] (0.538,-1.298) circle (.16);
\draw [very thick] (0,1) arc (90:83:1);
\draw [very thick] (0.707,.707) arc (45:52:1);
\draw [loosely dotted,thick] (0.113,.994) arc (83.5:52:1);

\path (-.22,-1.15) node [red] {$\delta_1$};
\path (-.96,-.62) node [red] {$\delta_2$};

\path (-.82,.29) node [red] {$\delta_{3}$};
\path (-.33,.79) node [red] {$\delta_{4}$};

\path (1.22,.24) node [red] {\footnotesize$\delta_{g-2}$};

\path (1.22,-.22) node [red] {\footnotesize$\delta_{g-1}$};

\path (.23,-1.17) node [red] {$\delta_g$};

\path (-.64,-.58) node [myblue] {$\epsilon_2$};
\path (-.58,0) node [myblue] {$\epsilon_3$};
\path (-.15,.27) node [myblue] {$\epsilon_4$};
\path (.57,0) node [myblue,fill=white] {$\epsilon_{g-3}$};

\path (-.55,-1.64) node [violet] {$A_1$};
\path (-.82,-1.41) node [mygreen] {$B_1$};
\path (.54,-1.645) node [violet] {$A_g$};
\path (.84,-1.43) node [mygreen] {$B_g$};
\path (-1.63,-.53) node [violet] {$A_2$};
\path (-1.4,-.84) node [mygreen] {$B_2$};
\path (-1.55,.79) node [violet] {$A_3$};
\path (-1.58,.4) node [mygreen] {$B_3$};
\path (-.51,1.64) node [violet] {$A_4$};
\path (-.22,1.2) node [mygreen] {$B_4$};

\path (1.88,-.75) node [violet] {$A_{g-1}$};
\path (1.39,-.85) node [mygreen] {\footnotesize$B_{g-1}$};

\path (1.91,.72) node [violet] {$A_{g-2}$};
\path (1.42,.84) node [mygreen] {\footnotesize$B_{g-2}$};

\draw [red] (0.071,-1.07) .. controls (0.158,-1.076) ..  (0.404,-.9745)  .. controls (0.65,-0.873) ..(0.707,-0.807);
\draw [red,rotate=45] (0.071,-1.07) .. controls (0.158,-1.076) .. (0.404,-.9745) .. controls (0.65,-0.873) ..(0.707,-0.807);
\draw [red,rotate=90] (0.071,-1.07) .. controls (0.158,-1.076) .. (0.404,-.9745) .. controls (0.65,-0.873) ..(0.707,-0.807);
\draw [red,rotate=-45] (0.071,-1.07) .. controls (0.158,-1.076) .. (0.404,-.9745) .. controls (0.65,-0.873) ..(0.707,-0.807);
\draw [red,rotate=-90] (0.071,-1.07) .. controls (0.158,-1.076) .. (0.404,-.9745) .. controls (0.65,-0.873) ..(0.707,-0.807);
\draw [red,rotate=-135] (0.071,-1.07) .. controls (0.158,-1.076) .. (0.404,-.9745) .. controls (0.65,-0.873) ..(0.707,-0.807);
\draw [red,rotate=180] (0.071,-1.07) .. controls (0.158,-1.076) .. (0.404,-.9745) .. controls (0.65,-0.873) ..(0.707,-0.807);

\draw [mygreen,->] (0.768,-1.298) arc (0:361:.23);
\draw [mygreen,rotate=45,->] (0.768,-1.298) arc (0:361:.23);
\draw [mygreen,rotate=90,->] (0.768,-1.298) arc (0:361:.23);
\draw [mygreen,rotate=-45,->] (0.768,-1.298) arc (0:361:.23);
\draw [mygreen,rotate=-90,->] (0.768,-1.298) arc (0:361:.23);
\draw [mygreen,rotate=-135,->] (0.768,-1.298) arc (0:361:.23);
\draw [mygreen,rotate=180,->] (0.768,-1.298) arc (0:361:.23);

\draw [violet,->] (0.595,-1.436) .. controls (0.592,-1.506) .. (.648,-1.6415);
\draw [violet] (0.595,-1.436) .. controls (0.592,-1.506) .. (.626,-1.5875) .. controls (0.66,-1.669) .. (0.711,-1.717) ;
\draw [violet,->,rotate=45] (0.595,-1.436) .. controls (0.592,-1.506) .. (.648,-1.6415);
\draw [violet,rotate=45] (0.595,-1.436) .. controls (0.592,-1.506) .. (.626,-1.5875) .. controls (0.66,-1.669) .. (0.711,-1.717) ;
\draw [violet,->,rotate=90] (0.595,-1.436) .. controls (0.592,-1.506) .. (.648,-1.6415);
\draw [violet,rotate=90] (0.595,-1.436) .. controls (0.592,-1.506) .. (.626,-1.5875) .. controls (0.66,-1.669) .. (0.711,-1.717) ;
\draw [violet,->,rotate=-45] (0.595,-1.436) .. controls (0.592,-1.506) .. (.648,-1.6415);
\draw [violet,rotate=-45] (0.595,-1.436) .. controls (0.592,-1.506) .. (.626,-1.5875) .. controls (0.66,-1.669) .. (0.711,-1.717) ;
\draw [violet,->,rotate=-90] (0.595,-1.436) .. controls (0.592,-1.506) .. (.648,-1.6415);
\draw [violet,rotate=-90] (0.595,-1.436) .. controls (0.592,-1.506) .. (.626,-1.5875) .. controls (0.66,-1.669) .. (0.711,-1.717) ;
\draw [violet,->,rotate=-135] (0.595,-1.436) .. controls (0.592,-1.506) .. (.648,-1.6415);
\draw [violet,rotate=-135] (0.595,-1.436) .. controls (0.592,-1.506) .. (.626,-1.5875) .. controls (0.66,-1.669) .. (0.711,-1.717) ;
\draw [violet,->,rotate=180] (0.595,-1.436) .. controls (0.592,-1.506) .. (.648,-1.6415);
\draw [violet,rotate=180] (0.595,-1.436) .. controls (0.592,-1.506) .. (.626,-1.5875) .. controls (0.66,-1.669) .. (0.711,-1.717) ;

\draw [myblue] (0,-1.005) .. controls (-.06,-0.88) .. (-.06,0) .. controls (-.06,0.88) .. (0,1.005);

\draw [myblue] (-.024,-1.01) .. controls (-.123, -.922).. (-.422,-.182) .. controls (-.721,0.578) ..   
(-.71,0.71);

\draw [myblue] (.024,-1.01) .. controls (.013, -.878).. (.312,-.138) .. controls (.611,0.622) ..  (.71,0.71);

\draw [myblue] (-.05,-1.03) .. controls (-.17,-.985) .. (-.5645,-.554) .. controls (-.969,-.123).. (-1.005,0);

\draw [myblue] (.05,-1.03) .. controls (.086,-.907) .. (.4805,-.476) .. controls (.885,-.045).. (1.005,0);

\end{tikzpicture}

\caption{Surface~$\Sigma_g$ and curves $\delta_1,\ldots,\delta_g,\epsilon_2,\ldots,\epsilon_{g-2}$}\label{figure_delta_epsilon}
\end{figure}

Let $\CV$ be the set of all \textit{unordered} splittings
$$
H_1(\Sigma_g)= W_1\oplus\cdots\oplus W_g
$$ 
such that $\rank W_i=2$, $i=1,\ldots,g$, and $W_i$ and $W_j$ are orthogonal with respect to the intersection form unless $i=j$. The word `unordered' means that we do not distinguish between splittings that are obtained from each other by permutations of summands. Once we have realized the surface~$\Sigma_g$ as shown in Fig.~\ref{figure_delta_epsilon}, we obtain a distinguished splitting $\W_{0}=\{W_{0,1},\ldots,W_{0,g}\}\in \CV$ consisting of the subgroups 
$$W_{0,i}=H_1(\bT_i)\subset H_1(\Sigma_g),$$ 
where $\bT_i\subset \Sigma_g$ is the  one-punctured torus bounded by~$\delta_i$.

The symplectic group~$\Sp(2g,\Z)$ acts transitively on~$\CV$. The stabilizer of the (unordered) splitting~$\W_0$ is the semi-direct product
$$
\CH=\left(\underbrace{\SL(2,\Z)\times\cdots\times\SL(2,\Z)}_{g}\right)\rtimes S_g,
$$
where every factor~$\SL(2,\Z)$ acts by special linear transformations of the corresponding summand~$W_{0,i}\cong\Z^2$ and the symmetric group~$S_g$ permutes the summands. Obviously,   $\CH$ has infinite index in~$\Sp(2g,\Z)$.

\begin{theorem}\label{theorem_Abelian}
Suppose that $g\ge 3$. Let $X_0=1,X_1,X_2,\ldots\in\Sp(2g,\Z)$ be representatives of all left cosets of~$\CH$ in~$\Sp(2g,\Z)$ and let $\varphi_0=1,\varphi_1,\varphi_2,\ldots\in\Mod_g$ be mapping classes that go to $X_0,X_1,X_2,\ldots$, respectively, under the natural surjective homomorphism $\Mod_g\twoheadrightarrow \Sp(2g,\Z)$. Then the abelian cycles 
$$
\iA_s=(\varphi_{s})_*(\iA_0)=\iA\left(T_{\varphi_s(\delta_1)},\ldots,T_{\varphi_s(\delta_g)},T_{\varphi_s(\epsilon_2)},\ldots,T_{\varphi_s(\epsilon_{g-2})}\right),\quad s=0,1,2,\ldots
$$ 
form a basis of a free $\Z[U]$-submodule of~$H_{2g-3}(\K_g)$. In other words, the homology classes 
$$
t^{\theta}\iA_s,\qquad \theta\in U,\ s=0,1,2,\ldots
$$
are linearly independent, i.\,e., form a basis of a free abelian subgroup of~$H_{2g-3}(\K_g)$.
\end{theorem}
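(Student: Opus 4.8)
The plan is to separate the classes $t^{\theta}\iA_s$ in two independent directions: the index~$s$ will be detected by the symplectic splitting that the underlying configuration carries, while the exponent~$\theta$ will be detected by the $U$-action inside a fixed splitting. The key observation is that, although $\I_g$ acts trivially on $H_1(\Sigma_g)$, the configuration underlying an abelian cycle still remembers a splitting. Since $fT_cf^{-1}=T_{f(c)}$, we have $f_*\iA_0=\iA(T_{f(\delta_1)},\dots,T_{f(\epsilon_{g-2})})$, and the one-holed tori $f(\bT_i)$ satisfy $H_1(f(\bT_i))=f_*(W_{0,i})$. Hence $f_*\iA_0$ is attached to the splitting $X\W_0$, where $X\in\Sp(2g,\Z)$ is the image of~$f$. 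In particular the splitting attached to $t^{\theta}\iA_s$ equals $X_s\W_0=\W_s$ for every~$\theta$, and $\W_s=\W_{s'}$ forces $X_{s'}^{-1}X_s\in\Stab(\W_0)=\CH$, i.e.\ $s=s'$. Thus the splitting separates the index~$s$, while~$U$ preserves it.

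First I would turn this observation into an actual detection homomorphism
$$\Phi\colon H_{2g-3}(\K_g)\longrightarrow\bigoplus_{\W\in\CV}M_{\W},$$
equivariant for the action of~$\Mod_g$ through $\Sp(2g,\Z)$ permuting the summands along $\CV\cong\Sp(2g,\Z)/\CH$ and through~$U$ acting inside each~$M_{\W}$ (which is legitimate precisely because~$U$ fixes every splitting). The natural way to produce such a map is through the action of~$\K_g$ on the contractible complex of cycles~$\B_g$ of Bestvina, Bux, and Margalit~\cite{BBM07}, which they used to compute $\cd(\K_g)=2g-3$. The associated equivariant homology spectral sequence converges to $H_*(\K_g)$, and in the top degree~$2g-3$ the surviving contributions are governed by the cells whose $\K_g$-stabilisers realise the cohomological dimension; these correspond to maximal systems of disjoint separating curves of the shape of Fig.~\ref{figure_delta_epsilon}. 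Each such system cuts off $g$ genus-one pieces and thereby determines an element of~$\CV$, which produces the $\CV$-grading of the target and defines the summands~$M_{\W}$.

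By equivariance $\Phi(\iA_s)=X_s\cdot\Phi(\iA_0)$ lies in~$M_{\W_s}$, and using that~$U$ is normal with $\Sp$-equivariant Johnson homomorphism one gets $\Phi(t^{\theta}\iA_s)=X_s\cdot\Phi(t^{X_s^{-1}\theta}\iA_0)\in M_{\W_s}$. Since distinct~$s$ give distinct summands, no linear relation can mix different values of~$s$, and the theorem collapses to the single statement that $\Phi(\iA_0)$ generates a free $\Z[U]$-submodule of rank one in~$M_{\W_0}$. To establish this I would compute the $U$-action on $\Phi(\iA_0)$ explicitly: realise a given~$\theta$ by a concrete $f\in\I_g$ with $\tau(f)=\theta$, track how~$f$ deforms the base configuration, and then build a secondary homomorphism $M_{\W_0}\to\Z[U]$ sending $\Phi(\iA_0)$ to the unit $t^{0}=1$, which forces freeness. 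The nonvanishing $\Phi(\iA_0)\neq0$ refines---and in fact implies---the statement $\iA_0\neq0$ of Proposition~\ref{propos_infinite}.

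I expect the main obstacle to be concentrated in the two constructions above. First, in producing~$\Phi$ one must show that the top of the equivariant homology of~$\B_g$ genuinely splits along~$\CV$, i.e.\ that the top boundary map does not mix cells carrying different splittings; this requires a careful analysis of the stabilisers of the top cells and of the incidences between them. Second, and harder, is the freeness over~$\Q[U]$: one must prove that no nontrivial element of~$\Q[U]$ annihilates $\Phi(\iA_0)$, which amounts to understanding precisely how the Johnson homomorphism moves the configuration of Fig.~\ref{figure_delta_epsilon} and why distinct values of~$\theta$ yield independent deformations. Everything else---equivariance, the coset bookkeeping $\W_s=X_s\W_0$, and the passage from linear independence of the images back to linear independence of the classes $t^{\theta}\iA_s$ themselves---is then formal.
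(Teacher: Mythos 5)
Your overall strategy---detect the index $s$ by the splitting in $\CV$ carried by the underlying configuration and detect $\theta$ by $\K_g$-orbit data inside a fixed $\I_g$-orbit, using the Cartan--Leray spectral sequence for the action of $\K_g$ on the complex of cycles---is indeed the skeleton of the paper's argument. But there are two genuine gaps. The first and most serious: nothing in your proposal can show $\Phi(\iA_0)\neq 0$, equivalently $\iA_0\neq 0$ in $H_{2g-3}\bigl(\Stab_{\K_g}(M)\bigr)$. The spectral sequence only reduces linear independence of the $t^{\theta}\iA_s$ to the nonvanishing of the individual classes; it gives no lower bound. The paper supplies this by an entirely separate ingredient you do not mention: Morita's homomorphisms $\lambda_{p,q}\colon\K_g\to\Z$ built from the Casson invariant of Heegaard splittings of $S^3$, whose $(2g-3)$-fold cup product pairs with $\iA_0$ to give $(-1)^{g-1}2^{g-2}\neq 0$ (Propositions~\ref{propos_infinite} and~\ref{propos_Phi_infinite}). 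Your remark that ``the nonvanishing $\Phi(\iA_0)\neq 0$ refines and implies $\iA_0\neq0$'' has the logic backwards: one must prove $\iA_0\neq 0$ by other means before the orbit bookkeeping yields anything. Second, your single detection homomorphism onto $\bigoplus_{\W}M_{\W}$ followed by $M_{\W_0}\to\Z[U]$ cannot exist as described. The cells of $\B_g(x)$ are multicurves of \emph{non-separating} curves supporting basic $1$-cycles for a fixed $x\in H_1(\Sigma_g)$ (not ``maximal systems of disjoint separating curves''); the separating curves $\delta_i,\epsilon_j$ enter only because they are disjoint from such a multicurve $M=\alpha_1\cup\cdots\cup\alpha_g$, so that $\iA_0$ factors through $H_{2g-3}\bigl(\Stab_{\K_g}(M)\bigr)$. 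For any fixed vertex $M$, the $\K_g$-orbits inside its $\I_g$-orbit are classified by $U/V$ with $V=\tau\bigl(\Stab_{\I_g}(M)\bigr)$, and $V$ is a large subgroup (Lemma~\ref{lem_invariant} shows $V\subseteq U_a$ but $V\neq 0$), so a single choice of $x$ separates only those $\theta$ whose differences avoid $V$; likewise a single $x$ does not produce vertices adapted to every splitting in $\CV$.

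The paper circumvents both obstructions to your ``one global homomorphism'' plan by arguing relation by relation: given a putative minimal relation $\sum_q m_q t^{\theta_q}\iA_0=0$, it chooses the symplectic basis (hence $x$ and $M$) so that $\theta_2-\theta_1\notin U_a\supseteq V$, splits the relation along $\K_g$-orbits via Proposition~\ref{propos_embed}, and contradicts minimality (the case $Q=1$ being exactly where Proposition~\ref{propos_infinite} is consumed); and given a relation $\sum_s c_s\iA_s=0$ over $\Z[U]$, it invokes Lemma~\ref{lem_generic} to choose $x$ generic with respect to the finitely many splittings $\W_0,\ldots,\W_N$ involved, so that the associated vertices $M_s$ lie in distinct $\I_g$-orbits. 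If you want to salvage your approach, you must either (a) accept this per-relation choice of $x$, which collapses your argument into the paper's, or (b) prove that the intersection over all admissible $M$ of the subgroups $\tau\bigl(\Stab_{\I_g}(M)\bigr)$ is trivial and assemble the resulting maps coherently---which is essentially what the minimality argument does implicitly. In either case the Casson-invariant input remains indispensable and must be added.
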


This theorem provides the first explicit construction of infinitely many linearly independent homology classes in $H_*(\K_g,\Q)$, thus giving solution to Problem~5.15 in~\cite{Far06}.

Obviously, Theorem~\ref{theorem_main} follows from Theorem~\ref{theorem_Abelian}. 
Let us show that Theorem~\ref{theorem_main2} also follows from Theorem~\ref{theorem_Abelian}.

\begin{proof}[Proof of Theorem~\ref{theorem_main2}] 
Since $\Q$ is a flat $\Z$-module, one can easily deduce from Theorem~\ref{theorem_Abelian} that the images of the elements~$\iA_0,\iA_1,\iA_2,\ldots$ in~$H_{2g-3}(\K_g,\Q)$ form a basis of a free $\Q[U]$-submodule of it. Since $U$ is a free abelian group of rank $r={2g\choose 3}-2g$, we obtain that $\Q[U]$ is isomorphic to the ring of Laurent polynomials in $r$ independent variables with coefficients in~$\Q$. Hence $\Q[U]$ is a Noetherian commutative ring. To complete the proof of Theorem~\ref{theorem_main2} it is enough to notice that a finitely generated module over a Noetherian ring cannot contain an infinitely generated free submodule.
\end{proof}

Two main ingredients of our proof of Theorem~\ref{theorem_Abelian}  are the Cartan--Leray spectral sequence for the action of~$\K_g$ on the complex of cycles~$\B_g$ constructed by  Bestvina, Bux, and Margalit (see~\cite{BBM07}), and Morita's construction of homomorphisms $\K_g\to\Z$ related to the Casson invariant of homology $3$-spheres (see~\cite{Mor89}, \cite{Mor91}). 

Either of the groups~$H_{2g-3}(\K_g)$ and~$H_{2g-3}(\I_g)$ contains a free abelian subgroup of infinite rank generated by abelian cycles. (For~$\I_g$ this was proved by the author in~\cite{Gai18}.) However, the abelian cycles in~$H_{2g-3}(\I_g)$ that were proved in~\cite{Gai18} to generate a free abelian subgroup of infinite rank do not belong to the image of the homomorphism $H_{2g-3}(\K_g)\to H_{2g-3}(\I_g)$ induced by the inclusion. The following question naturally arises. 

\begin{quest}
Is the image of the homomorphism $H_{2g-3}(\K_g)\to H_{2g-3}(\I_g)$ infinitely generated? Is the image of the homomorphism $H_{2g-3}(\K_g,\Q)\to H_{2g-3}(\I_g,\Q)$ infinite-dimensional?
\end{quest}

\section{Casson invariant and cohomology class $\Phi$}

The aim of this section is to prove the following proposition.

\begin{propos}\label{propos_infinite}
The homology class~$\iA_0\in H_{2g-3}(\K_g)$ given by~\eqref{eq_A} is nonzero and has an infinite order.
\end{propos}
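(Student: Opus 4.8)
The plan is to detect the abelian cycle $\iA_0$ by pairing it against an explicit cohomology class of the right degree, thereby showing it is nonzero of infinite order. Since $\iA_0$ is the image of the fundamental class $\mu_{2g-3}$ of the torus $\Z^{2g-3}=\langle T_{\delta_1},\ldots,T_{\delta_g},T_{\epsilon_2},\ldots,T_{\epsilon_{g-2}}\rangle$ under the map $\chi\colon\Z^{2g-3}\to\K_g$, it suffices to produce a cohomology class $\Phi\in H^{2g-3}(\K_g;\Z)$ (or with $\Q$-coefficients, to get infinite order) whose pullback $\chi^*\Phi$ evaluates nontrivially on $\mu_{2g-3}$. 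Concretely, I would seek $\Phi$ as a cup product of $2g-3$ degree-one classes, that is, of homomorphisms $\K_g\to\Z$, so that the pairing $\langle\chi^*\Phi,\mu_{2g-3}\rangle$ becomes the determinant of the $(2g-3)\times(2g-3)$ matrix whose $(i,j)$ entry records the value of the $i$th homomorphism on the $j$th Dehn twist $T_{\delta_j}$ (or $T_{\epsilon_j}$).

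The natural source of such degree-one classes is Morita's family of homomorphisms $\K_g\to\Z$ coming from the Casson invariant, as flagged in the introduction (see \cite{Mor89}, \cite{Mor91}). The key algebraic facts I would invoke are: first, Morita's homomorphisms are genuinely defined on $\K_g$ (the Casson invariant gives a well-defined $\Z$-valued homomorphism after restricting to the Johnson kernel); and second, for a separating curve $\gamma$ cutting off a subsurface, the value of a Morita homomorphism on the twist $T_\gamma$ is computable in terms of the genus of the bounded subsurface (up to normalization, it is essentially the genus of the smaller side, or a simple polynomial in it). The crucial structural point is that the curves $\delta_1,\ldots,\delta_g,\epsilon_2,\ldots,\epsilon_{g-2}$ cut $\Sigma_g$ into pieces of differing topological type, so by choosing $2g-3$ Morita homomorphisms adapted to the various subsurfaces one can arrange the pairing matrix to be nonsingular. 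Thus I would: (i) recall the precise definition and homomorphism property of the relevant Casson-type invariants on $\K_g$; (ii) assemble them into a class $\Phi=\phi_1\smile\cdots\smile\phi_{2g-3}\in H^{2g-3}(\K_g)$; (iii) compute each $\phi_i(T_{\delta_j})$ and $\phi_i(T_{\epsilon_j})$ explicitly; and (iv) show the resulting matrix has nonzero determinant, whence $\langle\chi^*\Phi,\mu_{2g-3}\rangle\ne0$ and $\iA_0\ne0$ of infinite order.

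Since the abelian cycle is a cup product pairing, the cleanest route is to use that for commuting elements $h_1,\ldots,h_k$ and cohomology classes $\phi_1,\ldots,\phi_k\in H^1(G;\Z)=\Hom(G,\Z)$ one has the formula
$$
\langle\phi_1\smile\cdots\smile\phi_k,\iA(h_1,\ldots,h_k)\rangle=\det\bigl(\phi_i(h_j)\bigr)_{i,j=1}^{k}.
$$
I would state and use this standard identity, reducing the whole problem to a determinant computation. The remaining work is then purely to exhibit $k=2g-3$ homomorphisms $\phi_i\colon\K_g\to\Z$ whose values on the $2g-3$ separating twists form an invertible integer matrix.

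The main obstacle I anticipate is \emph{constructing enough linearly independent homomorphisms} $\K_g\to\Z$ that are simultaneously (a) well-defined on all of $\K_g$, not merely on a handle subgroup, and (b) computable on the specific twists $T_{\delta_j},T_{\epsilon_j}$. A single Casson-type homomorphism will not separate all $2g-3$ twists, so the delicate part is to produce a full family — likely by using the naturality of Morita's construction under the embeddings of subsurfaces determined by each $\delta_j$ and $\epsilon_j$, pulling back a core Casson homomorphism from lower-genus pieces. Verifying that these pulled-back homomorphisms genuinely extend to $\K_g$ and that the combined evaluation matrix is triangular (or otherwise visibly nonsingular) with respect to the natural ordering of the curves is where the real care is needed; once triangularity with nonzero diagonal is established, infinite order of $\iA_0$ follows immediately since the determinant is a nonzero integer.
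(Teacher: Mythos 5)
Your overall strategy coincides with the paper's: build $\Phi\in H^{2g-3}(\K_g)$ as a product of $2g-3$ Casson--Morita homomorphisms $\K_g\to\Z$ and reduce the pairing $\langle\Phi,\iA_0\rangle$ to the determinant of the evaluation matrix $\bigl(\phi_i(h_j)\bigr)$; the determinant identity you quote is exactly the one used. You have also correctly located the only hard point, namely producing $2g-3$ homomorphisms whose values on $T_{\delta_1},\ldots,T_{\delta_g},T_{\epsilon_2},\ldots,T_{\epsilon_{g-2}}$ form a nonsingular matrix. However, the mechanism you sketch for that step is the part that fails. First, ``pulling back a core Casson homomorphism from lower-genus pieces'' via the subsurfaces cut off by the $\delta_j$ and $\epsilon_j$ is not a construction: there is no restriction homomorphism from $\K_g$ to the Johnson kernel of a subsurface, so a homomorphism defined on a lower-genus piece does not induce one on all of $\K_g$. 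Second, your fallback description of the values --- that $\phi(T_\gamma)$ is a function of the genus of the piece cut off by $\gamma$ --- holds only for the conjugation-invariant core homomorphism, and any family of homomorphisms with that property is useless here: all $g$ curves $\delta_i$ cut off genus-one pieces, so every such homomorphism takes the same value on $T_{\delta_1},\ldots,T_{\delta_g}$, the evaluation matrix has $g$ equal columns, and the determinant vanishes identically. The expectation of a triangular matrix does not materialize either.

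What actually makes the argument work in the paper is that Morita's homomorphism $\lambda_f$ depends on a choice of Heegaard embedding $f\colon\Sigma_g\hookrightarrow S^3$, and its value on a separating twist is given by an explicit quadratic expression in the Seifert linking form $l_f$ restricted to the cut-off piece --- not by the genus alone. For the standard embedding $f_0$ one even has $\lambda_{f_0}(T_{\delta_i})=0$ for all $i$, so one must genuinely vary the embedding: the paper precomposes $f_0$ with mapping classes $\psi_{p,q}$ realizing explicit symplectic transformations, obtaining homomorphisms $\lambda_{p,q}$ with $\lambda_{p,q}(T_{\delta_i})=1$ exactly for $i\in\{p,q\}$ and $\lambda_{p,q}(T_{\epsilon_i})=1$ exactly for $p\le i<q$. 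Selecting the $2g-3$ homomorphisms $\lambda_{1,2},\ldots,\lambda_{1,g},\lambda_{2,3},\ldots,\lambda_{g-1,g}$ gives a concrete $0/1$ matrix whose determinant is $-2^{g-2}\ne 0$. Your proposal is therefore the right plan with the correct reduction, but the step that carries all the content --- exhibiting the family and verifying nonsingularity --- is both missing and pointed in a direction that cannot be completed as described.
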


To prove this, we shall construct a cohomology class $\Phi\in H^{2g-3}(\K_g)$ satisfying $\langle\Phi,\iA_0\rangle\ne 0$. The class~$\Phi$ will be the product of $2g-3$ one-dimensional cohomology classes corresponding to certain Morita homomorphisms $\K_g\to\Z$. This construction developes the approach due to Brendle and Farb~\cite{BrFa07} who used Morita homomorphisms to prove the linear independence of certain abelian cycles in $H^2(\K_g)$.

By the fundamental theorem of Casson, there exists a well-defined integer valued invariant~$\lambda$ for oriented homology $3$-spheres that satisfies the following conditions (cf.~\cite{Sav99}):
\begin{enumerate}
\item The reduction of~$\lambda$ modulo~$2$ is the Rokhlin invariant.
\item $\lambda(-M)=-\lambda(M)$.
\item $\lambda(M_1\# M_2)=\lambda(M_1)+\lambda(M_2)$.
\item Suppose that $k$ is a knot in an oriented homology $3$-sphere~$M$ and $M_n(k)$ is the homology $3$-sphere obtained from~$M$ by performing the $(1/n)$-surgery on~$k$. Then 
\begin{equation*}
\lambda\bigl(M_n(k)\bigr)=\lambda(M)+\frac{n}{2}\,\Delta''_k(1),
\end{equation*}
where $\Delta_k(t)$ is the Alexander polynomial of~$k$ normalized so that $\Delta_k(t^{-1})=\Delta_k(t)$ and $\Delta_k(1)=1$.
\end{enumerate}

Recall Morita's construction of homomorphisms $\K_g\to\Z$ based on the Casson invariant, see~\cite{Mor89},~\cite{Mor91}.

Let  $M$ be an oriented  homology $3$-sphere and let $f\colon \Sigma_g\hookrightarrow M$ be a Heegaard embedding. Then the surface $f(\Sigma_g)$ splits $M$ into two handlebodies, which we denote by $V_+$ and $V_-$ so that the orientation of~$V_+$ induces the orientation of~$\partial V_+=f(\Sigma_g)$ coinciding with those obtained by applying~$f$ to the given orientation of~$\Sigma_g$. Let 
$$
l_f\colon H_1(\Sigma_g)\times H_1(\Sigma_g)\to\Z
$$
be the Seifert bilinear form given by
$$
l_f(x,y)=\lk\bigl(f_*(x),f_*(y)^+\bigr),
$$
where $f_*(y)^+$ is the cycle in $M\setminus f(\Sigma_g)$ obtained by pushing the cycle $f_*(y)$ to the positive direction, and $\lk(\cdot,\cdot)$ denotes the linking number. (Notice that the cycle~$f_*(y)^+$ lies in~$V_-$, since the positive normal to~$f(\Sigma_g)$ is the outer normal for~$V_+$.) It is easy to check that
$$
l_f(y,x)=l_f(x,y)+ x\cdot y.
$$

For each $h\in\Mod_g$, we denote by~$M_h$ the oriented $3$-manifold obtained by cutting~$M$ along~$f(\Sigma_g)$ and then regluing the handlebodies~$V_+$ and~$V_-$ by the following homeomorphism of their boundaries:
$$
\partial V_+=f(\Sigma_g)\xrightarrow{f^{-1}}\Sigma_g\xrightarrow{h}\Sigma_g\xrightarrow{f}f(\Sigma_g)=\partial V_-.
$$
If $h$ belongs to the Torelli group~$\I_g$, then $M_h$ is again a homology sphere.
Consider the mapping $\lambda_f\colon \I_g\to\Z$ given by
\begin{equation*}
\lambda_f(h)=\lambda(M_h)-\lambda(M).
\end{equation*}

\begin{theorem}[{Morita~\cite[Theorem~2.2]{Mor91}}]
The restriction of~$\lambda_f$ to~$\K_g$ is a homomorphism. If $\gamma$ is a separating simple closed curve in~$\Sigma_g$ and $\Sigma'$ is one of the two connected components of $\Sigma_g\setminus \gamma$, then
\begin{equation}\label{eq_lambda}
\begin{split}
\lambda_f(T_{\gamma})
=&\sum_{i=1}^{g'}\sum_{j=1}^{g'}\bigl(l_f(a_i,a_j)l_f(b_i,b_j)-l_f(a_i,b_j)l_f(b_i,a_j)\bigr)\\
=&\sum_{i=1}^{g'}\bigl(l_f(a_i,a_i)l_f(b_i,b_i)-l_f(a_i,b_i)l_f(b_i,a_i)\bigr)\\
&+2\sum_{1\le i<j\le g'}\bigl(l_f(a_i,a_j)l_f(b_i,b_j)-l_f(a_i,b_j)l_f(a_j,b_i)\bigr),
\end{split}
\end{equation}
where $a_1,b_1,\ldots,a_{g'},b_{g'}$ is a symplectic basis of the subgroup $H_1(\Sigma')\subseteq H_1(\Sigma_g)$.
\end{theorem}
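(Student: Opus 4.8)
The plan is to establish the two assertions—that the restriction of $\lambda_f$ to $\K_g$ is a homomorphism, and the explicit formula for $\lambda_f(T_\gamma)$—by largely separate arguments: the formula rests on the Casson surgery formula (property (4)) together with a Seifert-matrix computation of the Alexander polynomial, while the homomorphism property rests on a cocycle identity for $\lambda_f$ combined with the Johnson homomorphism $\tau$.

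I would treat the formula first, since it is the more self-contained half. The starting point is the standard fact that cutting $M$ along $f(\Sigma_g)$ and regluing by a single Dehn twist $T_\gamma$ realizes $M_{T_\gamma}$ as the result of $(\pm 1)$-surgery on a pushoff of the knot $k=f(\gamma)$ into one of the handlebodies, performed with respect to the framing induced by the surface $f(\Sigma_g)$, the sign being pinned down by the (left) twisting convention. Because $\gamma$ is separating, its class is trivial in $H_1(\Sigma_g)$, so $l_f([\gamma],[\gamma])=0$, and hence the surface framing of $k$ coincides with the canonical (Seifert) framing; thus the surgery is exactly $(1/n)$-surgery with $n=\pm 1$ in the sense of property (4), giving $\lambda_f(T_\gamma)=\pm\tfrac12\Delta_k''(1)$. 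To evaluate $\Delta_k''(1)$ I would take as a Seifert surface for $k$ the subsurface $f(\Sigma')$ pushed slightly off $f(\Sigma_g)$ into a handlebody: its first homology is $H_1(\Sigma')$ with symplectic basis $a_1,b_1,\dots,a_{g'},b_{g'}$, and the associated Seifert matrix in this basis is precisely $V=(l_f(x_p,x_q))$ by the very definition of $l_f$ as a linking form. Invoking the classical identity $\tfrac12\Delta_k''(1)=a_2(k)$, the $z^2$-coefficient of the Conway polynomial $\nabla_k(z)=\det(t^{1/2}V-t^{-1/2}V^{T})$, and expanding this determinant to second order—using that $V-V^{T}$ is (minus) the intersection form, so $\det(V-V^{T})=1$ and the normalization $\Delta_k(1)=1$ is automatic—produces exactly the quadratic expression in the numbers $l_f(a_i,a_j)$, $l_f(a_i,b_j)$, and so on. Concretely, each summand $l_f(a_i,a_j)l_f(b_i,b_j)-l_f(a_i,b_j)l_f(b_i,a_j)$ is the $2\times 2$ minor of $V$ on the rows of block $i$ and the columns of block $j$, and the two rewritings in~\eqref{eq_lambda} follow by symmetrizing over $i,j$ via the relation $l_f(y,x)=l_f(x,y)+x\cdot y$.

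For the homomorphism property I would argue through a cocycle identity. Regluing $M$ by a composite $h_1h_2$ can be carried out in two stages: first form $M_{h_2}$, and then cut $M_{h_2}$ along the same Heegaard surface and reglue by $h_1$. Writing $f_2\colon\Sigma_g\hookrightarrow M_{h_2}$ for the resulting Heegaard embedding, one has $(M_{h_2})_{h_1}=M_{h_1h_2}$, and hence
\begin{equation*}
\lambda_f(h_1h_2)=\lambda_{f_2}(h_1)+\lambda_f(h_2),\qquad h_1,h_2\in\I_g.
\end{equation*}
Thus the failure of additivity is the well-defined geometric defect $\lambda_{f_2}(h_1)-\lambda_f(h_1)$, which records how the Casson function based on this Heegaard splitting changes when the ambient homology sphere is passed from $M$ to $M_{h_2}$. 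The key point I would aim for is that this defect depends on the based splitting only through its linking data, and that the change of these data produced by regluing along a Torelli map $h_2$ is governed by the image $\tau(h_2)\in U$ of the Johnson homomorphism; in particular the Seifert form satisfies $l_{f_2}=l_f$ once $\tau(h_2)=0$. Granting this, for $h_2\in\K_g=\ker\tau$ the defect vanishes and the displayed identity collapses to $\lambda_f(h_1h_2)=\lambda_f(h_1)+\lambda_f(h_2)$, so $\lambda_f$ restricts to a homomorphism on $\K_g$.

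The main obstacle is exactly this last structural claim: proving that the defect $\lambda_{f_2}(h_1)-\lambda_f(h_1)$ depends on $h_2$ only through $\tau(h_2)$ and vanishes when $\tau(h_2)=0$. This amounts to understanding how the self-linking data of curves on a Heegaard surface transform under cutting and regluing by a homologically trivial mapping class, and to identifying this variation with the Johnson homomorphism—the invariant that by construction captures precisely the degree-two part of a Torelli element invisible to the action on $H_1$. Careful bookkeeping of orientations, of the pushoff direction in the definition of $l_f$, and of the sign of the surgery coefficient will also be needed to fix the overall sign in~\eqref{eq_lambda}, but these are routine once the structural statement about the change of linking data under $\K_g$ is in place.
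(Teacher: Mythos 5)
The paper offers no proof of this statement: it is quoted from Morita with a citation, and the only original content is the adjacent Remark fixing the sign for left versus right Dehn twists. So your proposal can only be judged against Morita's original argument. Your first half --- the formula~\eqref{eq_lambda} --- is essentially that argument and is sound: regluing along $T_\gamma$ is $(1/{\pm1})$-surgery on $f(\gamma)$, the surface framing agrees with the Seifert framing because $\gamma$ bounds the subsurface $f(\Sigma')$, and $\tfrac12\Delta''_k(1)=a_2(k)$ is read off from the Seifert matrix $V=\bigl(l_f(\cdot,\cdot)\bigr)$ of the Seifert surface $f(\Sigma')$; the determinant expansion you defer is routine (for $g'=1$ it is just $\det V$).

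The genuine gap is in the homomorphism half. The cocycle identity $\lambda_f(h_1h_2)=\lambda_{f_2}(h_1)+\lambda_f(h_2)$ is fine, but the step you yourself flag as ``the main obstacle'' --- that the defect $\lambda_{f_2}(h_1)-\lambda_f(h_1)$ depends on $h_2$ only through $\tau(h_2)$ and vanishes on $\K_g$ --- is the entire content of the theorem, and your proposed mechanism for it is both unsubstantiated and circular. Even granting that $l_{f_2}=l_f$ when $\tau(h_2)=0$ (which you do not verify), you would still need to know that $\lambda_f(h_1)$, a difference of Casson invariants of two honest $3$-manifolds, is a function of the linking form $l_f$ alone. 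For $h_1\in\K_g$ that statement is exactly what the theorem delivers (the twist formula plus the homomorphism property plus Johnson's theorem that $\K_g$ is generated by separating twists), so it cannot be an input. Morita's route is different and does not pass through the cocycle identity: writing $\varphi,\psi\in\K_g$ as products of separating twists, he realizes $M_{\varphi\psi}$ as surgery on the link formed by pushing the twisting curves to distinct levels of a collar of the Heegaard surface; each component bounds a pushed-off copy of its subsurface $\Sigma'$ inside its own level, so every sublink with at least two components is a boundary link, the higher Conway coefficients appearing as cross-terms in the multi-component surgery formula for $\lambda$ vanish, and the individual contributions simply add. That vanishing of cross-terms for boundary links is the missing idea; without it (or an equivalent), your argument does not close.
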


\begin{remark}
The sign in formula~\eqref{eq_lambda} is opposite to the sign in the corresponding formula in~\cite{Mor91}, since Morita used right Dehn twists and we are using left Dehn twists.
\end{remark}

Now, we choose several special Heegaard embeddings~$f$ of~$\Sigma_g$ into the standard sphere~$S^3$ and study the corresponding homomorphisms~$\lambda_f$.
Let $f_0\colon \Sigma_g\hookrightarrow S^3$ be the Heegaard embedding shown in Fig.~\ref{figure_delta_epsilon}. (We identify~$S^3$ with the one-point compactification of~$\R^3$.) In this figure, $V_+$ is the handlebody inside~$\Sigma_g$ and $V_-$ is the handlebody outside~$\Sigma_g$. For every $i=1,\ldots,g$, we choose a symplectic basis $A_i, B_i$ of~$W_{0,i}=H_1(\bT_i)$ such that a curve of homology class~$A_i$ bounds a disk in~$V_+$ and a curve of homology class~$B_i$ bounds a disk in~$V_-$, see Fig.~\ref{figure_delta_epsilon}. Then $A_1,B_1,\ldots,A_{g},B_{g}$ is a symplectic basis of~$H_1(\Sigma_g)$. It is easy to see that 
\begin{gather*}
l_{f_0}(A_i,A_j)=l_{f_0}(B_i,B_j)=l_{f_0}(A_i,B_j)=0,\\
l_{f_0}(B_i,A_j)=
\left\{
\begin{aligned}
&1&&\text{if $i=j$,}\\
&0&&\text{if $i\ne j$.}
\end{aligned}
\right.
\end{gather*}

Now, suppose that $1\le p<q\le g$. Choose an arbitrary mapping class $\psi_{p,q}\in\Mod_g$ that acts on~$H_1(\Sigma_g)$ by the following symplectic transformation:
\begin{align*}
A_p&\mapsto A_p+A_q+B_q,&
B_p&\mapsto A_p+B_p,\\ 
A_q&\mapsto A_q+A_p+B_p,&
B_q&\mapsto A_q+B_q,\\
A_i&\mapsto A_i,&  B_i&\mapsto B_i,\qquad\qquad i\ne p,q.
\end{align*}
Consider the Heegaard embedding 
$$
f_{p,q} = f_0\circ \psi_{p,q}\colon\Sigma_g\hookrightarrow S^3.
$$ 
Then 
$$
l_{f_{p,q}}(x,y)=l_{f_0}\bigl(\psi_{p,q}(x),\psi_{p,q}(y)\bigr).
$$
It is easy to compute that 
\begin{align*}
l_{f_{p,q}}(A_p,A_p)=l_{f_{p,q}}(B_p,A_p)=l_{f_{p,q}}(B_p,B_p)&=1,&l_{f_{p,q}}(A_p,B_p)&=0,\\
l_{f_{p,q}}(A_q,A_q)=l_{f_{p,q}}(B_q,A_q)=l_{f_{p,q}}(B_q,B_q)&=1,&l_{f_{p,q}}(A_q,B_q)&=0,\\
l_{f_{p,q}}(A_p,A_q)=l_{f_{p,q}}(A_p,B_q)=l_{f_{p,q}}(B_p,A_q)&=1,&l_{f_{p,q}}(B_p,B_q)&=0.\\
\end{align*}
Besides, 
\begin{gather*}
l_{f_{p,q}}(A_i,A_j)=l_{f_{p,q}}(B_i,B_j)=l_{f_{p,q}}(A_i,B_j)=0,\\
l_{f_{p,q}}(B_i,A_j)=
\left\{
\begin{aligned}
&1&&\text{if $i=j$,}\\
&0&&\text{if $i\ne j$,}
\end{aligned}
\right.
\end{gather*}
provided that at least one of the two indices~$i$ and~$j$ is neither~$p$ nor~$q$.

We put $\lambda_{p,q}=\lambda_{f_{p,q}}$. Then, using~\eqref{eq_lambda}, we immediately obtain:
\begin{align}\label{eq_lambda_delta}
\lambda_{p,q}(T_{\delta_i})&=
\left\{
\begin{aligned}
&1&&\text{if $i=p,q$,}\\
&0&&\text{if $i\ne p,q$,}
\end{aligned}
\right.\\
\label{eq_lambda_eps}
\lambda_{p,q}(T_{\epsilon_i})&=
\left\{
\begin{aligned}
&1&&\text{if $p\le i<q$,}\\
&0&&\text{if $i<p$ or $i\ge q$.}
\end{aligned}
\right.
\end{align}

The homomorphisms~$\lambda_{p,q}\colon\K_g\to\Z$ are cohomology classes in $H^1(\K_g)$. We put 
$$
\Phi=(\lambda_{1,2}\lambda_{1,3}\cdots\lambda_{1,g})(\lambda_{2,3}\lambda_{3,4}\cdots\lambda_{g-1,g})\in H^{2g-3}(\K_g).
$$ 
For multiplication in cohomology we use the sign convention as in~\cite[Chapter~V]{Bro82}.

\begin{propos}\label{propos_Phi_infinite}
$\langle\Phi,\iA_0\rangle=(-1)^{g-1}\cdot2^{g-2}$.
\end{propos}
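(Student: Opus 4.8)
The plan is to reduce the pairing $\langle\Phi,\iA_0\rangle$ to a determinant and then to evaluate that determinant by elementary row operations.

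First I would establish the general formula expressing the pairing of a cup product of one-dimensional integral cohomology classes against an abelian cycle as a signed determinant: if $h_1,\dots,h_k$ are pairwise commuting elements of a group~$G$ and $u_1,\dots,u_k\in H^1(G)=\mathrm{Hom}(G,\Z)$, then
$$
\langle u_1\cup\cdots\cup u_k,\ \iA(h_1,\dots,h_k)\rangle=(-1)^{k(k-1)/2}\det\bigl(u_i(h_j)\bigr)_{1\le i,j\le k}.
$$
To prove this I would use naturality: writing $\chi\colon\Z^k\to G$ for the homomorphism sending the $i$-th generator to~$h_i$, one has $\iA(h_1,\dots,h_k)=\chi_*(\mu_k)$, so the pairing equals $\langle\chi^*u_1\cup\cdots\cup\chi^*u_k,\mu_k\rangle$. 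In $H^*(\Z^k)\cong\wedge^*H^1(\Z^k)$ the cup product of the classes $\chi^*u_i=\sum_j u_i(h_j)\,x_j$ equals $\det\bigl(u_i(h_j)\bigr)\,x_1\cup\cdots\cup x_k$, while evaluating the top class $x_1\cup\cdots\cup x_k$ on the standard generator $\mu_k$, identified with the cross product $\mu_1\times\cdots\times\mu_1$, produces the Koszul sign $(-1)^{k(k-1)/2}$ by iterating the cross-product pairing rule $\langle\alpha\times\beta,a\times b\rangle=(-1)^{\deg\beta\cdot\deg a}\langle\alpha,a\rangle\langle\beta,b\rangle$ under the sign conventions of~\cite[Chapter~V]{Bro82}. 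This is the same mechanism used by Brendle and Farb~\cite{BrFa07}.

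Next I would apply this with $k=2g-3$, taking $u_1,\dots,u_{2g-3}$ to be the factors $\lambda_{1,2},\dots,\lambda_{1,g},\lambda_{2,3},\dots,\lambda_{g-1,g}$ of~$\Phi$ in the given order and $h_1,\dots,h_{2g-3}$ to be $T_{\delta_1},\dots,T_{\delta_g},T_{\epsilon_2},\dots,T_{\epsilon_{g-2}}$. Using~\eqref{eq_lambda_delta} and~\eqref{eq_lambda_eps} one writes down the $(2g-3)\times(2g-3)$ matrix $M=\bigl(u_i(h_j)\bigr)$ with entries in $\{0,1\}$. Since $(2g-3)(g-2)\equiv g\pmod 2$, the claim reduces to showing $\det M=-2^{g-2}$, after which the formula yields $\langle\Phi,\iA_0\rangle=(-1)^{(2g-3)(g-2)}(-2^{g-2})=(-1)^{g-1}2^{g-2}$. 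To evaluate $\det M$ I would perform the determinant-preserving row operations replacing each row $\lambda_{q,q+1}$ (for $q=2,\dots,g-1$) by $\lambda_{q,q+1}-\lambda_{1,q+1}+\lambda_{1,q}$; these are legitimate because $\lambda_{1,q+1}$ and $\lambda_{1,q}$ are themselves rows of~$M$. A direct check with~\eqref{eq_lambda_delta}, \eqref{eq_lambda_eps} shows each new row has a single nonzero entry, equal to~$2$, in the column of~$T_{\delta_q}$. Reordering so that the $g-1$ rows $\lambda_{1,q}$ and the columns $T_{\delta_1},T_{\epsilon_2},\dots,T_{\epsilon_{g-2}},T_{\delta_g}$ come first makes the matrix block triangular with diagonal blocks $2I_{g-2}$ and the submatrix of the rows $\lambda_{1,q}$ on those columns; the latter is lower unitriangular, hence has determinant~$1$. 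Thus $\det M=\pm 2^{g-2}$, and the sign is that of the column permutation used, which I would compute from its inversion count $(g-3)(g-1)+(g-2)$, odd for every $g\ge3$, giving $\det M=-2^{g-2}$.

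The routine-but-delicate part, which is the only real obstacle, is the sign bookkeeping: one must correctly fix the Koszul sign $(-1)^{k(k-1)/2}$ in the pairing formula and the sign of the column permutation in the determinant evaluation, each an elementary parity computation. As a safeguard against sign errors I would verify the final outcome against the small cases, where direct computation gives $\det M=-2$ for $g=3$ and $\det M=-4$ for $g=4$, matching $\langle\Phi,\iA_0\rangle=2$ and $\langle\Phi,\iA_0\rangle=-4$ respectively.
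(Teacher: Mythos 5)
Your proposal is correct and follows essentially the same route as the paper: it reduces the pairing to $(-1)^{\binom{2g-3}{2}}\det\bigl(\kappa_i(h_j)\bigr)$ with the same ordering of the $\lambda_{p,q}$ and of the twists, and arrives at $\det C=-2^{g-2}$, which the paper states without detail ("it is not hard to compute"). Your row operations $\lambda_{q,q+1}\mapsto\lambda_{q,q+1}-\lambda_{1,q+1}+\lambda_{1,q}$, the resulting block-triangular form, and the inversion count $(g-3)(g-1)+(g-2)$ (odd for all $g\ge 3$) all check out, and agree with the small cases $g=3,4$.
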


\begin{proof}
Put $\kappa_i=\lambda_{1,i+1}$ for $i=1,\ldots,g-1$ and $\kappa_i=\lambda_{i-g+2,i-g+3}$ for $i=g,\ldots,2g-3$. Put $h_i=T_{\delta_i}$ for $i=1,\ldots,g$ and $h_i=T_{\epsilon_{i-g+1}}$ for $i=g+1,\ldots,2g-3$. Then
$$
\langle\Phi,\iA_0\rangle=\langle\kappa_1\cdots\kappa_{2g-3},\iA(h_1,\ldots,h_{2g-3})\rangle=(-1)^{2g-3 \choose 2}\det C=(-1)^{g}\det C,
$$
where $C=(\kappa_i(h_j))$. 

Using~\eqref{eq_lambda_delta} and~\eqref{eq_lambda_eps}, we see that

\begin{equation*}
C=
\begin{pmatrix}
1 & 1 & 0 & 0 & 0 & \hdots & 0 & 0 & \vline & 0 & 0 & 0 & \hdots & 0\\
1 & 0 & 1 & 0 & 0 & \hdots & 0 & 0 & \vline & 1 & 0 & 0 & \hdots & 0\\
1 & 0 & 0 & 1 & 0 & \hdots & 0 & 0 & \vline & 1 & 1 & 0 & \hdots & 0\\
1 & 0 & 0 & 0 & 1 & \hdots & 0 & 0 & \vline & 1 & 1 & 1 & \hdots & 0\\
\vdots & \vdots &\vdots &\vdots &\vdots & \ddots &\vdots &\vdots & \vline & \vdots & \vdots  &\vdots & \ddots & \vdots \\
1 & 0 & 0 & 0 & 0 & \hdots & 1 & 0 & \vline & 1 & 1 & 1 & \hdots & 1\\
1 & 0 & 0 & 0 & 0 & \hdots & 0 & 1 & \vline & 1 & 1 & 1 & \hdots & 1\\
\hline
0 & 1 & 1 & 0 & 0 & \hdots & 0 & 0 & \vline & 1 & 0 & 0 & \hdots & 0\\
0 & 0 & 1 & 1 & 0 &  \hdots & 0 & 0 & \vline &0 & 1 & 0 & \hdots & 0\\
0 & 0 & 0 & 1 & 1 &\hdots & 0 & 0 & \vline &0 & 0 & 1 & \hdots & 0\\
\vdots & \vdots &\vdots &\vdots &\vdots  & \ddots & \vdots & \vdots & \vline &\vdots & \vdots  &\vdots & \ddots & \vdots\\
0 & 0 & 0 & 0 & 0 & \hdots & 1 &0 & \vline &0 & 0 & 0 & \hdots & 1\\
0 & 0 & 0 & 0 & 0 & \hdots & 1 &1 & \vline &0 & 0 & 0 & \hdots & 0
\end{pmatrix}
\end{equation*}
where the top left, top right, bottom left, and bottom right blocks have sizes $(g-1)\times g$, $(g-1)\times (g-3)$, $(g-2)\times g$, and $(g-2)\times (g-3)$, respectively. 
Now, it is not hard to compute that $\det C = -2^{g-2}$.
\end{proof}

Proposition~\ref{propos_infinite} follows immediately from Proposition~\ref{propos_Phi_infinite}.

\section{Complex of cycles and Cartan--Leray spectral sequence}\label{section_background}

Recall in a convenient for us form some definitions and results of~\cite{BBM07}. 
Denote by~$\cur$ the set of isotopy classes of oriented non-separating simple closed curves on~$\Sigma_g$. A finite union of pairwise disjoint oriented simple closed curves on~$\Sigma_g$ is said to be an \textit{oriented multicurve} if none of these curves is null-homotopic and no pair of these curves are homotopic to each other  with either preserving or reversing the orientation. All curves and multicurves will be always considered up to isotopy, and for the sake of simplicity, we shall later not distinguish between a curve or a multicurve and its isotopy class.

A \textit{basic $1$-cycle} for a homology class~$x\in H_1(\Sigma_g)$ is  a finite formal linear combination $\boldsymbol{\gamma}=\sum_{i=1}^k n_i\gamma_i$, where $n_i$ are positive integers and $\gamma_i\in\cur$, satisfying the following conditions:
\begin{enumerate}
\item The curves $\gamma_1,\ldots,\gamma_k$ are pairwise disjoint. (More precisely, the isotopy classes~$\gamma_1,\ldots,\gamma_k$ contain representatives that are pairwise disjoint.)
\item The homology classes $[\gamma_1],\ldots,[\gamma_k]$ are linearly independent.
\item $\sum_{i=1}^k n_i[\gamma_i]=x$.
\end{enumerate} 
Then the union $\gamma_1\cup\cdots\cup\gamma_k$ is an oriented multicurve without separating components. This multicurve is called the \textit{support} of~$\boldsymbol{\gamma}$. 

Choose a nonzero homology class $x\in H_1(\Sigma_g)$. 
Denote by~$\M(x)$ the set of isotopy classes of oriented multicurves~$M$ on~$\Sigma_g$ satisfying the following conditions: 
\begin{enumerate}
\item For each component~$\gamma$ of~$M$, there exists a basic $1$-cycle for~$x$ whose support  is contained in~$M$ and contains~$\gamma$.

\item Any nontrivial linear combination of the homology classes of components of~$M$ with nonnegative coefficients is nonzero.
\end{enumerate}

For an oriented multicurve~$M\in\M(x)$, let~$P_M$ be the convex hull in~$(\R_+)^{\cur}$ of all basic $1$-cycles for~$x$ with supports contained in~$M$. Since there are finitely many such basic $1$-cycles, $P_M$ is a finite-dimensional convex polytope.  It is not hard to check that every face of~$P_M$ is $P_N$ for an oriented  multicurve $N\in\M(x)$ contained in~$M$, and the intersection of any two polytopes~$P_{M_1}$ and~$P_{M_2}$ is either empty or a face~$P_N$ of both of them. The union of the polytopes~$P_M$ for all $M\in\M(x)$ is the regular cell complex~$\B_g(x)$, which was introduced by Bestvina, Bux, and Margalit~\cite{BBM07}  and was called by them the \textit{complex of cycles}.

We denote by~$\M_0(x)$ the subset of~$\M(x)$ consisting of all~$M$ such  that $\dim P_M=0$, i.\,e., $P_M$ is a vertex of~$\B_g(x)$. Obviously, $M\in\M_0(x)$ if and only if $M$ is the support of a basic $1$-cycle for~$x$.

\begin{theorem}[Bestvina, Bux, Margalit~\cite{BBM07}]\label{theorem_BBM} 
Suppose that $g\ge 2$ and $x\in H_1(\Sigma_g)$ is a nonzero homology class.
Then the complex of cycles~$\B_g(x)$ is contractible.
\end{theorem}

The group~$\I_g$ and its subgroup~$\K_g$ act cellularly and without rotations on~$\B_g(x)$. `Without rotations' means that if an element $h\in\I_g$ stabilizes a cell~$P_M$, then it stabilizes every point of~$P_M$.

Recall some standard facts on the Cartan--Leray spectral sequence, see~\cite{Bro82} for details. Suppose that a (discrete) group~$G$ acts cellularly and without rotations on a cell complex~$X$. Then there exists the Cartan--Leray spectral sequence~$E^*_{*,*}$ satisfying the following conditions:
\begin{enumerate}
\item The page~$E^1$ is given by
\begin{equation}\label{eq_E1pq}
E^1_{p,q}\cong\bigoplus_{\sigma\in \mathcal{X}_p}H_q(\Stab_{G}(\sigma)),
\end{equation}
where $\mathcal{X}_p$ is any set containing exactly one representative in every $G$-orbit of $p$-cells of~$X$, and $\Stab_{G}(\sigma)$ denotes the stabilizer of~$\sigma$.
\item The differential $d^r$ has bidegree $(-r,r-1)$, $r=1,2,\ldots$.
\item  $\bigoplus_{p+q=s}E^{\infty}_{p,q}$ is the adjoint graded group for certain filtration in the equivariant homology group~$H_s^G(X)$. In particular, there is a natural embedding $E^{\infty}_{0,s}\hookrightarrow H_s^G(X)$. Moreover, the isomorphism~\eqref{eq_E1pq} turns the composite homomorphism 
$$E^1_{0,s}\twoheadrightarrow E^{\infty}_{0,s}\hookrightarrow H_s^G(X)$$ 
into the sum of the homomorphisms 
$$
H_s(\Stab_{G}(v))=H_s^{\Stab_{G}(v)}(v)\to H_s^G(X),\qquad v\in\mathcal{X}_0,
$$
induced by the inclusions $v\subseteq X$ and $\Stab_{G}(v)\subseteq G$.
\end{enumerate}

If $X$ is contractible, then $H_*^G(X)\cong H_*(G)$.

Now, let $E^*_{*,*}$ be the Cartan--Leray spectral sequence for the action of~$\K_g$ on the contractible complex of cycles~$\B_g(x)$. Obviously, $\Stab_{\K_g}(P_M)=\Stab_{\K_g}(M)$.
Bestvina, Bux, and Margalit~\cite[Proposition~6.2]{BBM07} proved that  
$$
\cd\bigl(\Stab_{\K_g}(M)\bigr)\le 2g-3-\dim P_M
$$
for all $M\in\M(x)$. This immediately implies that $E^1_{p,q}=0$ whenever $p+q>2g-3$. Therefore, all differentials~$d^r$ to the group~$E^1_{0,2g-3}$ are trivial, hence, $E^{\infty}_{0,2g-3}=E^1_{0,2g-3}$. Thus, the above property~(3) of Cartan--Leray spectral sequence yields the following.

\begin{propos}\label{propos_embed}
Let $\mathfrak{M}\subseteq\M_0(x)$ be a subset consisting of oriented multicurves in pairwise different $\K_g$-orbits. Then the inclusions $\Stab_{\K_g}(M)\subseteq\K_g$, $M\in\mathfrak{M}$, induce the injective homomorphism
$$
\bigoplus_{M\in \mathfrak{M}}H_{2g-3}\bigl(\Stab_{\K_g}(M)\bigr)\hookrightarrow H_{2g-3}(\K_g).
$$
\end{propos}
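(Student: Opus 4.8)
The plan is to read the statement directly off the bottom edge of the Cartan--Leray spectral sequence $E^*_{*,*}$ for the action of $\K_g$ on $\B_g(x)$, assembling the two facts already established above: the vanishing $E^1_{p,q}=0$ for $p+q>2g-3$, which comes from the Bestvina--Bux--Margalit bound $\cd(\Stab_{\K_g}(M))\le 2g-3-\dim P_M$, together with property~(3), which identifies the edge homomorphism with the sum of the inclusion-induced maps.

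First I would make the term $E^1_{0,2g-3}$ explicit. The $0$-cells of $\B_g(x)$ are exactly the vertices $P_M$ with $\dim P_M=0$, and these are in bijection with the multicurves $M\in\M_0(x)$. Choosing a set $\mathcal{X}_0$ of representatives of the $\K_g$-orbits in $\M_0(x)$ and specializing~\eqref{eq_E1pq} to $p=0$, $q=2g-3$ gives
$$
E^1_{0,2g-3}\cong\bigoplus_{M\in\mathcal{X}_0}H_{2g-3}\bigl(\Stab_{\K_g}(M)\bigr).
$$

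Second, I would check that this corner survives unchanged to $E^\infty$. Since $d^r$ has bidegree $(-r,r-1)$, any incoming differential to position $(0,2g-3)$ originates at $(r,2g-2-r)$, of total degree $2g-2>2g-3$; its source is a subquotient of $E^1_{r,2g-2-r}=0$, so it vanishes. Outgoing differentials from $(0,2g-3)$ land in negative $p$-degree and also vanish. Hence $E^\infty_{0,2g-3}=E^1_{0,2g-3}$ and the edge surjection $E^1_{0,2g-3}\twoheadrightarrow E^\infty_{0,2g-3}$ is an isomorphism. By property~(3), the composite
$$
E^1_{0,2g-3}\xrightarrow{\ \cong\ }E^\infty_{0,2g-3}\hookrightarrow H_{2g-3}^{\K_g}\bigl(\B_g(x)\bigr)\cong H_{2g-3}(\K_g)
$$
is injective and, under the identification of the first paragraph, coincides with the sum over $M\in\mathcal{X}_0$ of the maps $H_{2g-3}(\Stab_{\K_g}(M))\to H_{2g-3}(\K_g)$ induced by the inclusions $\Stab_{\K_g}(M)\subseteq\K_g$.

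Finally, to pass from a full set of orbit representatives to the given subset $\mathfrak{M}$, I would note that because the multicurves in $\mathfrak{M}$ lie in pairwise distinct $\K_g$-orbits, the representative set $\mathcal{X}_0$ may be chosen with $\mathfrak{M}\subseteq\mathcal{X}_0$. The homomorphism in the proposition is then the restriction of the injective map of the previous paragraph to the direct summand indexed by $\mathfrak{M}$, and the restriction of an injective homomorphism to a direct summand is injective, as required. There is no genuinely hard step: all the content is carried by the two imported facts, and the only point demanding care is the bidegree bookkeeping confirming that the vanishing $E^1_{p,q}=0$ for $p+q>2g-3$ kills \emph{every} differential touching the corner $(0,2g-3)$, which is exactly what makes $E^\infty_{0,2g-3}=E^1_{0,2g-3}$.
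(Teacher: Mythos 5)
Your proof is correct and follows essentially the same route as the paper: the vanishing $E^1_{p,q}=0$ for $p+q>2g-3$ forces $E^{\infty}_{0,2g-3}=E^1_{0,2g-3}$, and property~(3) of the Cartan--Leray spectral sequence identifies the resulting injection with the sum of the inclusion-induced maps. The only detail you spell out more explicitly than the paper is the extension of $\mathfrak{M}$ to a full set of orbit representatives followed by restriction to a direct summand, which is a harmless and correct bookkeeping step.
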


\section{Proof of Theorem~\ref{theorem_Abelian}}

Recall that a nonzero element $a\in H_1(\Sigma_g)$ is called \textit{primitive} if $a$ is divisible by no integer greater than~$1$. For a primitive $a\in H_1(\Sigma_g)$, let $\T_a\colon H_1(\Sigma_g)\to H_1(\Sigma_g)$ be the transformation given by $\T_a(x)=x+(x\cdot a)a$. Then for any simple closed curve in homology class~$a$, the Dehn twist about it acts on~$H_1(\Sigma_g)$ by~$\T_a$.

To a symplectic basis $a_1,b_1,\ldots, a_g,b_g$ of~$H_1(\Sigma_g)$, we assign the splitting $U=U_a\oplus U_b$ such that $U_a$ is generated by all elements $a_i\wedge a_j\wedge a_k$ and $a_i\wedge a_j\wedge b_k$  and $U_b$ is generated by all elements  $a_i\wedge b_j\wedge b_k$ and $b_i\wedge b_j\wedge b_k$. 

The following lemma is proved by a straightforward computation.

\begin{lem}\label{lem_invariant}
Let $a_1,b_1,\ldots, a_g,b_g$ be a symplectic basis of~$H_1(\Sigma_g)$. Then an element $\theta\in U$ satisfies $\T_{a_i}(\theta)=\theta$ for all $i=1,\ldots,g$ if and only if  $\theta$ belongs to the subgroup of~$U$ generated by the~${g\choose 3}$ elements $a_i\wedge a_j\wedge a_k$, $i<j<k$, and the $g(g-1)$ elements $a_i\wedge a_j\wedge b_j$, $i\ne j$. In particular, any such~$\theta$ lies in~$U_a$.
\end{lem}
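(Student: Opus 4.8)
The plan is to verify Lemma~\ref{lem_invariant} by a direct computation on an explicit basis of~$U$, reducing the problem to understanding how each transformation~$\T_{a_i}$ acts on the standard wedge basis of~$\wedge^3 H_1(\Sigma_g)$ and then passing to the quotient~$U=\wedge^3H_1(\Sigma_g)/H_1(\Sigma_g)$. First I would record that $\T_{a_i}$ fixes every basis vector $a_j$ and $b_j$ with $j\ne i$, fixes $a_i$, and sends $b_i\mapsto b_i + (b_i\cdot a_i)\,a_i = b_i - a_i$ (using the symplectic convention $a_i\cdot b_i = 1$, so $b_i\cdot a_i = -1$; the exact sign is immaterial to the conclusion). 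Extending~$\T_{a_i}$ as an automorphism of the exterior algebra, I would compute its effect on each of the four types of wedge generators listed before the lemma, namely $a_i\wedge a_j\wedge a_k$, $a_i\wedge a_j\wedge b_k$, $a_i\wedge b_j\wedge b_k$, and $b_i\wedge b_j\wedge b_k$.

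The key observation is that a wedge monomial is fixed by~$\T_{a_i}$ precisely when it contains no factor~$b_i$, and when it does contain~$b_i$ the effect is to subtract off the monomial with that~$b_i$ replaced by~$a_i$. Thus a monomial containing exactly one~$b_i$ is sent to itself minus a monomial with one fewer~$b$, while a monomial containing both $b_i$ and some $b_j$ ($j\ne i$) or containing $a_i$ alongside $b_i$ behaves accordingly. Collecting these, an element $\theta\in\wedge^3H_1(\Sigma_g)$ is fixed by all the~$\T_{a_i}$ exactly when every monomial appearing in its expansion that involves some~$b_i$ is cancelled by the correction terms coming from monomials with more $b$-factors. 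Organizing the basis by the number of $b$-indices present, I would argue that fixedness under all~$\T_{a_i}$ forces the coefficients of all monomials containing a ``removable'' $b_i$ to vanish, leaving only the purely-$a$ monomials $a_i\wedge a_j\wedge a_k$ together with the monomials of the form $a_i\wedge a_j\wedge b_j$, in which the single $b_j$ is paired with its dual $a_j$ and so is protected: the correction $\T_{a_j}(a_i\wedge a_j\wedge b_j)$ produces $a_i\wedge a_j\wedge a_j = 0$, making such a monomial genuinely invariant.

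The one subtlety, and the step I expect to require the most care, is that we are working in the quotient~$U$ rather than in $\wedge^3 H_1(\Sigma_g)$ itself, so ``$\T_{a_i}(\theta)=\theta$ in $U$'' means the difference lies in the image of~$H_1(\Sigma_g)$ under $x\mapsto x\wedge\Omega$, where $\Omega=\sum_k a_k\wedge b_k$. I would therefore check that the embedded copy $H_1(\Sigma_g)\wedge\Omega$ is itself $\T_{a_i}$-invariant (each $\T_{a_i}$ fixes~$\Omega$, since $\T_{a_i}$ is a symplectic transformation and $\Omega$ is dual to the intersection form), so that the induced action on~$U$ is well defined and the fixed-point computation descends cleanly. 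A careful bookkeeping then shows that modulo $H_1(\Sigma_g)\wedge\Omega$ no new invariants are created and none of the claimed invariants become dependent, which pins down the invariant subgroup as the span of the ${g\choose 3}$ elements $a_i\wedge a_j\wedge a_k$ ($i<j<k$) and the $g(g-1)$ elements $a_i\wedge a_j\wedge b_j$ ($i\ne j$). Finally, since none of these representatives involve a $b$-factor paired with a different $a$-index, they all lie in~$U_a$, giving the last assertion of the lemma.
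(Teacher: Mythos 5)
The paper offers no proof of this lemma beyond the remark that it follows by a straightforward computation, and your direct computation on the monomial basis of $\wedge^3H_1(\Sigma_g)$, graded by the number of $b$-factors, is exactly that computation; your identification of the passage to the quotient $U$ as the one genuinely delicate step is also correct, since for instance $b_j\wedge\Omega$ has every monomial containing an ``unpaired'' $b$-factor and yet represents $0$ in $U$, so the coefficient-vanishing argument is valid only modulo $H_1(\Sigma_g)\wedge\Omega$ (one checks that the extra invariants this produces all lie in the span of the $b_j\wedge\Omega$, hence vanish in $U$). One small correction: the listed invariants \emph{do} become dependent in $U$, because $a_i\wedge\Omega=\sum_{k\ne i}a_i\wedge a_k\wedge b_k=0$ there; this is harmless, since the lemma asserts only that they generate the invariant subgroup, not that they are independent.
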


\begin{propos}\label{propos_indep1}
The homology classes $t^{\theta}\iA_0$, where $\theta$ runs over~$U$, are linearly independent in~$H_{2g-3}(\K_g)$.
\end{propos}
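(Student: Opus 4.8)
The goal is to show that the homology classes $t^\theta\iA_0$, $\theta\in U$, are linearly independent. The plan is to reduce this to the injectivity statement of Proposition~\ref{propos_embed} by interpreting the $U$-action in terms of the stabilizer of a well-chosen support multicurve. First I would recognize $\iA_0$ as the image of the fundamental class of the torus under the map induced by the free abelian group $\langle T_{\delta_1},\ldots,T_{\delta_g},T_{\epsilon_2},\ldots,T_{\epsilon_{g-2}}\rangle \subseteq \K_g$. Since every $\delta_i$ and every $\epsilon_i$ is separating, these twists act trivially on $H_1(\Sigma_g)$, so in order to apply Proposition~\ref{propos_embed} I need to realize $\iA_0$ as living inside the stabilizer $\Stab_{\K_g}(M)$ of a single vertex $M\in\M_0(x)$ of the complex of cycles $\B_g(x)$, for a suitably chosen nonzero class $x\in H_1(\Sigma_g)$.

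The key geometric step is to choose $x$ and a basic $1$-cycle support $M$ so that all $2g-3$ of the twisting curves $\delta_1,\ldots,\delta_g,\epsilon_2,\ldots,\epsilon_{g-2}$ are disjoint from $M$ and hence the corresponding Dehn twists fix $M$; this places the whole free abelian subgroup inside $\Stab_{\K_g}(M)$ and so $\iA_0\in H_{2g-3}(\Stab_{\K_g}(M))$. Concretely I would take $x$ to be (a primitive class represented by) a nonseparating curve disjoint from the separating system in Fig.~\ref{figure_delta_epsilon}, and let $M$ be a support multicurve carrying $x$ that is disjoint from every $\delta_i$ and $\epsilon_j$. The next step is to understand the $U$-action: since $\I_g$ permutes the set $\M_0(x)$ through its action on homology, the element $t^\theta$ for $\theta\in U$ moves the vertex $M$ to another vertex $M_\theta$, and the classes $t^\theta\iA_0$ become the images of $\iA_0$ under the maps on $H_{2g-3}$ induced by the inclusions $\Stab_{\K_g}(M_\theta)\subseteq \K_g$. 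Thus linear independence of the $t^\theta\iA_0$ will follow from Proposition~\ref{propos_embed} \emph{provided} the vertices $M_\theta$, as $\theta$ ranges over $U$, lie in pairwise distinct $\K_g$-orbits, together with the fact that $\iA_0$ itself is nonzero in $H_{2g-3}(\Stab_{\K_g}(M))$ — the latter being guaranteed by Proposition~\ref{propos_infinite}, whose cohomological pairing with $\Phi$ detects $\iA_0$ and hence also each translate.

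The main obstacle, and the heart of the argument, is proving that distinct $\theta\in U$ give multicurves $M_\theta$ in distinct $\K_g$-orbits. Two multicurves carrying $x$ lie in the same $\K_g$-orbit only if some element of $\K_g$ carries one to the other; since $\K_g$ acts trivially on $H_1(\Sigma_g)$, I expect the homology classes of the components of $M_\theta$ — equivalently the way the Johnson-homomorphism value $\theta$ interacts with the fixed topological type of $M$ — to serve as the orbit invariant distinguishing them. The delicate point is that $\K_g$ is much smaller than $\I_g$, so I cannot simply appeal to the $\Sp$-action; instead I must exploit the structure of which $\theta\in U$ fix the vertex $M$. Here Lemma~\ref{lem_invariant} is exactly the needed input: it identifies the stabilizer in $U$ of the homological data of $M$ as a specific proper subgroup generated by the $a_i\wedge a_j\wedge a_k$ and $a_i\wedge a_j\wedge b_j$, and a counting/indexing argument then shows that the coset representatives one must use to list the $M_\theta$ genuinely land in distinct $\K_g$-orbits. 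I would therefore first establish the orbit-separation via Lemma~\ref{lem_invariant}, then select $\mathfrak{M}=\{M_\theta:\theta\in U\}$ in Proposition~\ref{propos_embed}, and finally invoke the nonvanishing from Proposition~\ref{propos_infinite} to conclude that the images $t^\theta\iA_0$ are linearly independent.
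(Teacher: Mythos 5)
Your overall strategy --- realize $\iA_0$ in $H_{2g-3}\bigl(\Stab_{\K_g}(M)\bigr)$ for a vertex $M$ of $\B_g(x)$, translate by elements of $\I_g$ realizing the various $\theta$, and separate the translates via Proposition~\ref{propos_embed} --- is the right skeleton, and it is the paper's skeleton too. But the step you flag as ``the heart of the argument'' contains a genuine gap, and the resolution you sketch cannot work as stated. The vertices $M_\theta$ do \emph{not} lie in pairwise distinct $\K_g$-orbits as $\theta$ ranges over all of $U$: if $\theta-\theta'\in V:=\tau\bigl(\Stab_{\I_g}(M)\bigr)$, then the corresponding translates of $M$ differ by an element of $\K_g$ composed with an element stabilizing $M$, so they are $\K_g$-equivalent. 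The subgroup $V$ is far from trivial (it has infinite rank; Lemma~\ref{lem_invariant} only confines it to $U_a$, it does not kill it), so for a whole infinite-rank subgroup of exponents $\theta$ the classes $t^{\theta}\iA_0$ all land in the \emph{same} summand of Proposition~\ref{propos_embed}, and no ``counting/indexing argument'' with a single fixed $M$ can separate them. Your concrete choice of $M$ as a single nonseparating curve makes this worse: Lemma~\ref{lem_invariant} requires all $g$ twists $T_{\alpha_1},\dots,T_{\alpha_g}$ to centralize $\Stab_{\I_g}(M)$, which forces $M$ to contain one curve in each torus $\bT_i$; with a single curve the stabilizer image in $U$ is even larger and not contained in $U_a$.

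The paper's way around this is a minimal-counterexample argument rather than a simultaneous separation. Take a shortest nontrivial relation $\sum_q m_q t^{\theta_q}\iA_0=0$; it has $Q>1$ terms by Proposition~\ref{propos_infinite}. The crucial extra degree of freedom is that the symplectic basis (hence $x=a_1+\cdots+a_g$ and $M=\alpha_1\cup\cdots\cup\alpha_g$) is chosen \emph{depending on the relation}: using the swap $a_i\mapsto b_i$, $b_i\mapsto -a_i$ one arranges $\theta_2-\theta_1\notin U_a\supseteq V$. Grouping the indices by cosets of $V$ then produces at least two nonempty groups; Proposition~\ref{propos_embed} applied to representatives of these cosets (which \emph{are} in distinct $\K_g$-orbits, precisely because their $\tau$-values differ modulo $V$) splits the relation into strictly shorter nontrivial relations, contradicting minimality. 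So the missing ideas in your proposal are: (i) you only ever need to distinguish finitely many cosets of $V$ at a time, not all of $U$; and (ii) the auxiliary class $x$ must be re-chosen for each relation so that the given exponents actually fall into different cosets of $V$.
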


\begin{proof}
Assume the converse and consider a nontrivial linear relation 
\begin{equation}\label{eq_linear_relation}
\sum_{q=1}^Q m_qt^{\theta_q}\iA_0=0,\qquad m_q\in\Z,\ \theta_q\in U,
\end{equation}
with the smallest possible number~$Q$ of summands. Then $\theta_1,\ldots,\theta_Q$ are pairwise different and $m_1,\ldots,m_Q$ are nonzero.

By Proposition~\ref{propos_infinite}, $\iA_0$ has infinite order. Hence $t^{\theta}\iA_0$ have infinite order for all $\theta\in U$. Therefore $Q>1$.

Let us show that there exists a symplectic basis $a_1,b_1,\ldots, a_g,b_g$ such that
\begin{enumerate}
\item for each~$i$, $a_i,b_i$ is a symplectic basis of~$W_{0,i}=H_1(\bT_i)$,
\item $\theta_2-\theta_1\notin U_a$.
\end{enumerate}
Indeed,  take any symplectic basis satisfying condition~(1). Assume that $\theta_2-\theta_1\in U_a$. Then $\theta_2-\theta_1\notin U_b$, since $\theta_1\ne \theta_2$. Replace every~$a_i$ with~$b_i$ and simultaneously replace every~$b_i$ with $-a_i$. The summands~$U_a$ and~$U_b$ will interchange. Hence we obtain the basis satisfying both required conditions.

We put $x=a_1+\cdots+a_g$ and consider the set of oriented multicurves~$\M_0(x)$. Every homology class~$a_i$ can be represented by an oriented simple closed curve~$\alpha_i$ lying in~$\bT_i$. Then the oriented multicurve $M=\alpha_1\cup\cdots\cup\alpha_g$ belongs to~$\M_0(x)$, since $\alpha_1+\cdots+\alpha_g$ is a basic $1$-cycle for~$x$. Since the curves $\delta_1,\ldots,\delta_g,\epsilon_2,\ldots,\epsilon_{g-2}$ are disjoint from~$M$, we obtain that $\iA_0$ lies in the image of the homomorphism $H_{2g-3}\bigl(\Stab_{\K_g}(M)\bigr)\to H_{2g-3}(\K_g)$ induced by the inclusion.

We put $V=\tau\bigl(\Stab_{\I_g}(M)\bigr)\subseteq U$. The Dehn twists~$T_{\alpha_1},\ldots,T_{\alpha_g}$ commute with all elements of~$\Stab_{\I_g}(M)$. Hence  $\T_{a_1},\ldots,\T_{a_g}$ stabilize every element of~$V$. By Lemma~\ref{lem_invariant}, $V\subseteq U_a$. Therefore $\theta_2-\theta_1\notin V$. Decompose the set of indices $\{1,\ldots,Q\}$ into pairwise disjoint subsets~$I_1,\ldots,I_R$ such that $q_1$ and~$q_2$ belong to the same subset~$I_r$ if and only if $\theta_{q_1}$ and $\theta_{q_2}$ belong to the same coset of~$V$ in~$U$. Since $1$ and~$2$ lie in different subsets~$I_{r_1}$ and~$I_{r_2}$, we obtain that $R>1$. Hence the cardinality of every~$I_r$ is strictly smaller than~$Q$.

For every $r=1,\ldots,R$, choose a mapping class $h_r\in\I_g$ such that $\tau(r)$ lies in the coset of~$V$ that contains the elements~$\theta_q$ for all $q\in I_r$. Then $\tau(h_1),\ldots,\tau(h_R)$ lie in pairwise different cosets of~$V$. Obviously $h_r(M)\in\M_0(x)$ for all~$r$. Notice that the multicurves~$h_1(M),\ldots,h_R(M)$ lie in pairwise different $\K_g$-orbits. Indeed, if there existed an element $k\in\K_g$ such that $kh_{r_1}(M)=h_{r_2}(M)$, $r_1\ne r_2$, then we would obtain that $h^{-1}_{r_2}kh_{r_1}\in\Stab_{\I_g}(M)$, hence, the element
$$
\tau\left(h^{-1}_{r_2}kh_{r_1}\right)=\tau(h_{r_1})-\tau(h_{r_2})
$$
would belong to~$V$, which is not true.

Now, it follows from Proposition~\ref{propos_embed} that the inclusions $\Stab_{\K_g}(h_r(M))\subseteq\K_g$ induce the injective homomorphism
$$
\iota\colon \bigoplus_{r=1}^RH_{2g-3}\bigl(\Stab_{\K_g}(h_r(M))\bigr)\hookrightarrow H_{2g-3}(\K_g).
$$
If $q\in I_r$, then we have $\theta_q-\tau(h_r)\in V$. Hence there exists an element $f_q\in \Stab_{\I_g}(M)$ such that $\tau(f_q)=\theta_q-\tau(h_r)$, i.\,e., $\tau(h_rf_q)=\theta_q$. Then $t^{\theta_q}\iA_0=(h_rf_q)_*(\iA_0)$. Since $\iA_0$ lies in the image of the homomorphism $H_{2g-3}\bigl(\Stab_{\K_g}(M)\bigr)\to H_{2g-3}(\K_g)$ induced by the inclusion and $h_rf_q(M)=h_r(M)$, we obtain that $$t^{\theta_q}\iA_0\in \iota\left(H_{2g-3}\bigl(\Stab_{\K_g}(h_r(M))\bigr)\right).$$
Now, it follows from~\eqref{eq_linear_relation} that
$$
\sum_{q\in I_r}m_qt^{\theta_q}\iA_0=0,\qquad r=1,\ldots,R,
$$
which contradicts the assumption of the minimality of~$Q$.
\end{proof}

\begin{cor}\label{cor_indep1}
For each~$s$, the homology classes $t^{\theta}\iA_s$, where $\theta$ runs over~$U$, are linearly independent in~$H_{2g-3}(\K_g)$.
\end{cor}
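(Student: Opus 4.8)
The plan is to deduce Corollary~\ref{cor_indep1} from Proposition~\ref{propos_indep1} by transporting the linear independence of $\{t^\theta\iA_0\}$ over to $\{t^\theta\iA_s\}$ via the action of the mapping class $\varphi_s$. The key observation is that $\iA_s=(\varphi_s)_*(\iA_0)$ by definition, and that $\varphi_s$, being an element of $\Mod_g$, acts on the whole homology group $H_{2g-3}(\K_g)$ as well as on the coefficient group $U$. First I would make precise how the $\Q[U]$-module structure (or $\Z[U]$-module structure) interacts with this outer action: for $\varphi\in\Mod_g$ mapping to $X\in\Sp(2g,\Z)$, conjugation by $\varphi$ induces an automorphism of $\K_g$ (since $\K_g$ is normal in $\Mod_g$) and hence an automorphism $(\varphi)_*$ of $H_{2g-3}(\K_g)$. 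The action of $U\cong\I_g/\K_g$ on $H_*(\K_g)$ factors through the $\Mod_g$-action, and the two are compatible in the sense that $(\varphi)_*(t^\theta\xi)=t^{X_*\theta}\,(\varphi)_*(\xi)$ for every $\theta\in U$ and every class $\xi$, where $X_*$ denotes the induced action of $X$ on $U=\wedge^3H_1(\Sigma_g)/H_1(\Sigma_g)$.

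With that compatibility in hand, the argument is short. Applying $(\varphi_s)_*$ to the collection $\{t^\theta\iA_0:\theta\in U\}$ sends it to $\{(\varphi_s)_*(t^\theta\iA_0)\}=\{t^{(X_s)_*\theta}\,\iA_s\}$. Since $X_s\in\Sp(2g,\Z)$ acts on $U$ through an automorphism, the map $\theta\mapsto(X_s)_*\theta$ is a bijection of $U$ onto itself; therefore the image collection is exactly $\{t^{\theta'}\iA_s:\theta'\in U\}$ after reindexing. I would then invoke the fact that $(\varphi_s)_*$ is an automorphism of the abelian group $H_{2g-3}(\K_g)$, so it carries a linearly independent family to a linearly independent family. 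Since Proposition~\ref{propos_indep1} gives that $\{t^\theta\iA_0\}_{\theta\in U}$ is linearly independent, its image $\{t^\theta\iA_s\}_{\theta\in U}$ is linearly independent as well, which is precisely the assertion of the corollary.

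The one step deserving genuine care, and which I expect to be the main (though modest) obstacle, is verifying the twisting identity $(\varphi)_*(t^\theta\xi)=t^{X_*\theta}(\varphi)_*(\xi)$. This is really a statement about equivariance of the $\I_g/\K_g$-module structure on $H_*(\K_g)$ under the conjugation action of the overgroup $\Mod_g$. Concretely, the $U$-action is defined by: pick a lift $\tilde\theta\in\I_g$ of $\theta$, and let $t^\theta$ act on $H_*(\K_g)$ by the automorphism induced by conjugation by $\tilde\theta$ (this is well-defined because $\K_g$ acts trivially on its own homology). Conjugating by $\varphi$ then sends $\tilde\theta$ to $\varphi\tilde\theta\varphi^{-1}\in\I_g$, whose image in $U$ is $X_*\theta$ since the Johnson homomorphism $\tau$ is $\Mod_g$-equivariant (the $\Sp(2g,\Z)$-action on $U$ is induced from the action on $H_1$, and $\tau(\varphi\tilde\theta\varphi^{-1})=X_*\tau(\tilde\theta)$). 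The identity $\varphi(\tilde\theta k\tilde\theta^{-1})\varphi^{-1}=(\varphi\tilde\theta\varphi^{-1})(\varphi k\varphi^{-1})(\varphi\tilde\theta\varphi^{-1})^{-1}$ for $k\in\K_g$ then yields the claim after passing to induced maps on homology. Once this naturality is recorded, the corollary follows immediately.
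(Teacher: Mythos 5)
Your argument is correct and is exactly the paper's proof: the paper deduces the corollary from Proposition~\ref{propos_indep1} via the single formula $X_s(t^{\theta}\iA_0)=t^{X_s(\theta)}\iA_s$, which is the twisting identity you verify. You simply spell out the naturality check that the paper leaves implicit.
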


\begin{proof}
This follows  from Proposition~\ref{propos_indep1} and the formula $X_s(t^{\theta}\iA_0)=t^{X_s(\theta)}\iA_s$.
\end{proof}

\begin{propos}\label{propos_indep2}
Suppose that 
\begin{equation*}
\sum_{s=0}^Nc_s\iA_s=0,\qquad c_s\in \Z[U].
\end{equation*}
Then $c_s\iA_s=0$ for  all~$s$.
\end{propos}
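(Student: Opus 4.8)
The plan is to deduce Proposition~\ref{propos_indep2} from the injectivity statement of Proposition~\ref{propos_embed} by exhibiting, for the given relation, a single nonzero homology class and a family of multicurves whose $\K_g$-orbits separate the contributions indexed by~$s$. The crucial elementary fact is that $\K_g\subseteq\I_g$ acts trivially on $H_1(\Sigma_g)$; consequently, if $k\in\K_g$ carries one oriented multicurve to another, then the two multicurves have the same (signed) multiset of homology classes of components. Thus the multiset of component classes is a $\K_g$-orbit invariant, and this is the invariant I will use to keep different values of~$s$ apart.

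First I would record the shape of the individual terms. Writing $c_s=\sum_\theta m_{s,\theta}t^{\theta}$ with finitely many nonzero $m_{s,\theta}$, and choosing for each~$\theta$ an element $u\in\I_g$ with $\tau(u)=\theta$, we have $t^{\theta}\iA_s=(u\varphi_s)_*\iA_0=\iA(T_{u\varphi_s(\delta_1)},\ldots,T_{u\varphi_s(\epsilon_{g-2})})$. Since $u$ acts trivially on homology, each curve $u\varphi_s(\delta_i)$ bounds a one-holed torus $u\varphi_s(\bT_i)$ carrying the subgroup $W_{s,i}:=X_s(W_{0,i})$, so the whole term is governed by the splitting $\W_s=\{W_{s,1},\ldots,W_{s,g}\}\in\CV$, which depends only on the coset $X_s\CH$, hence only on~$s$. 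As the $X_s$ represent distinct cosets of~$\CH$, the splittings $\W_0,\ldots,\W_N$ are pairwise distinct.

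Next I would construct a common homology class. Because $H_1(\Sigma_g)=\bigoplus_i W_{s,i}$ is an orthogonal splitting, every $x$ has a well-defined integral projection $x_{s,i}\in W_{s,i}$ with $\sum_i x_{s,i}=x$. I claim one can choose a nonzero~$x$ so that (a) every projection $x_{s,i}$ is primitive, and (b) for $s\ne s'$ the multisets $\{x_{s,i}\}_i$ and $\{x_{s',i}\}_i$ differ. Both are generic: condition (a) holds on a set of positive density in the lattice, while condition (b) can fail, for a fixed matching~$\pi$, only on the subspace $\{x:x_{s,i}=x_{s',\pi(i)}\text{ for all }i\}$, which is proper precisely because $\W_s\ne\W_{s'}$ (equality of all these projection maps would force $W_{s,i}=W_{s',\pi(i)}$ and equal complementary summands for every~$i$, hence $\W_s=\W_{s'}$); avoiding the finitely many such density-zero subspaces still leaves a positive-density set. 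For such~$x$, and for each pair $(s,\theta)$ occurring in the relation, I realize each primitive class $x_{s,i}$ by an oriented simple closed curve $\gamma_i$ inside $u\varphi_s(\bT_i)$. Their union $M_{s,\theta}$ is the support of the basic $1$-cycle $\sum_i\gamma_i$ (the classes are nonzero, lie in distinct orthogonal summands, and sum to~$x$), so $M_{s,\theta}\in\M_0(x)$, and it is disjoint from all $2g-3$ curves of $t^{\theta}\iA_s$. Hence $t^{\theta}\iA_s$ lies in the image of $H_{2g-3}\bigl(\Stab_{\K_g}(M_{s,\theta})\bigr)\to H_{2g-3}(\K_g)$, while the homology multiset of $M_{s,\theta}$ is $\{x_{s,i}\}_i$, independent of~$\theta$.

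Finally I would assemble the separation. Let $\mathfrak{M}_s$ be a set of $\K_g$-orbit representatives of $\{M_{s,\theta}\}_\theta$ and put $\mathfrak{M}=\bigsqcup_s\mathfrak{M}_s$. By the triviality of the $\K_g$-action on homology together with~(b), the orbits collected in $\mathfrak{M}$ are pairwise distinct, and those coming from different~$s$ never coincide. Each $c_s\iA_s$ then lies in the image of $\bigoplus_{M\in\mathfrak{M}_s}H_{2g-3}\bigl(\Stab_{\K_g}(M)\bigr)$, so the relation $\sum_s c_s\iA_s=0$ lies in the image of the injection $\bigoplus_{M\in\mathfrak{M}}H_{2g-3}\bigl(\Stab_{\K_g}(M)\bigr)\hookrightarrow H_{2g-3}(\K_g)$ of Proposition~\ref{propos_embed}; since the blocks indexed by distinct~$s$ are disjoint, each block contribution $c_s\iA_s$ must vanish. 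I expect the main obstacle to be the construction of the common class~$x$, specifically verifying that primitivity of all projections and distinctness of the projection multisets can be met simultaneously, as this is exactly the step that converts the distinctness of the cosets $X_s\CH$ into a geometric separation of the multicurves by a $\K_g$-orbit invariant.
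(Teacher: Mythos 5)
Your overall strategy coincides with the paper's: both arguments choose a single homology class $x$ whose projections onto the summands of the pairwise distinct splittings $\W_0,\ldots,\W_N$ yield, for each $s$, multicurves in $\M_0(x)$ supporting the cycles $t^{\theta}\iA_s$, and both use the multiset of homology classes of the components as a $\K_g$-orbit (indeed $\I_g$-orbit) invariant that separates different values of~$s$ before invoking Proposition~\ref{propos_embed}. (The paper organizes the $\theta$-direction via double cosets $\K_g\backslash\I_g/\Stab_{\I_g}(M_s)$ and the observation that each block $\Lambda_s$ is a $\Z[U]$-submodule, whereas you build a multicurve for each pair $(s,\theta)$ and pass to orbit representatives; these bookkeeping schemes are equivalent.)

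The one genuine gap is your condition~(a), that $x$ can be chosen with \emph{all} projections $x_{s,i}$ primitive. Unlike condition~(b), this is not a matter of avoiding finitely many density-zero subgroups: $x_{s,i}$ fails to be primitive exactly when $x\bmod p$ lies in the codimension-two subspace $\ker(\Pi_{s,i}\bmod p)$ of $\mathbb{F}_p^{2g}$, and for a small prime the union of these $(N+1)g$ subspaces could a priori cover all of $\mathbb{F}_p^{2g}$ once $(N+1)g\ge p^2$; at the very least, positivity of the density must be argued separately for every prime, and you give no such argument. The paper sidesteps the issue: Lemma~\ref{lem_generic} only requires the $x_{s,i}$ to be nonzero, one then writes $x_{s,i}=n_{s,i}a_{s,i}$ with $a_{s,i}$ primitive and takes the components of $M_s$ in the classes $a_{s,i}$, so that $\sum_i n_{s,i}\alpha_{s,i}$ is still a basic $1$-cycle for~$x$ (positive integer coefficients are allowed in the definition). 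The price is one extra step your write-up would then need: the distinctness of the multisets $\{x_{s,i}\}$ must be transferred to the multisets $\{a_{s,i}\}$, which the paper does by noting that $a_{q,\nu(i)}=a_{s,i}$ for all~$i$ forces $n_{q,\nu(i)}=n_{s,i}$ by linear independence of $a_{s,1},\ldots,a_{s,g}$, hence $x_{q,\nu(i)}=x_{s,i}$, a contradiction. With that substitution your argument closes; as written, it rests on an unproved (and possibly false) primitivity claim.
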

\begin{proof}
Recall that $\W_{0}=\{W_{0,1},\ldots,W_{0,g}\}$ is the splitting of~$H_1(\Sigma_g)$ consisting of the subgroups $W_{0,i}=H_1(\bT_i)$. 
Consider the splittings 
$$
\W_{s}=\{W_{s,1},\ldots,W_{s,g}\},\qquad W_{s,i}=(\varphi_s)_*(W_{0,i}).
$$
Since the transformations $X_0,\ldots,X_N\in\Sp(2g,\Z)$ lie in pairwise different left cosets of~$\CH$, we see that the splittings $\W_0,\ldots,\W_N$ are pairwise different.

For a homology class $x\in H_1(\Sigma_g)$, we have unique decompositions
$$
x=x_{s,1}+\cdots+x_{s,g},\qquad x_{s,i}\in W_{s,i},\ s=0,\ldots,N.
$$

\begin{lem}\label{lem_generic}
There exists a homology class $x\in H_1(\Sigma_g)$ such that
\begin{enumerate}
\item all homology classes $x_{s,i}$ are nonzero, $s=0,\ldots,N$, $i=1,\ldots,g$,
\item if $0\le s<q\le N$, then $\{x_{s,1},\ldots,x_{s,g}\}\ne \{x_{q,1},\ldots,x_{q,g}\}$.
\end{enumerate}
\textnormal{(}Here $\{x_{s,1},\ldots,x_{s,g}\}$ is the unordered set.\textnormal{)}
\end{lem}

\begin{proof}
Let $\Pi_{s,i}\colon H_1(\Sigma_g)\to H_1(\Sigma_g)$ be the projection along $\bigoplus_{j\ne i}W_{s,j}$ onto~$W_{s,i}$. Then $x_{s,i}=\Pi_{s,i}(x)$. For each pair $(s,q)$ such that $0\le s<q\le N$, the splittings $\W_s$ and~$\W_q$ are different. Hence there exists a number~$i_{s,q}$ such that $W_{s,i_{s,q}}\ne W_{q,j}$ for all $j$. Then $\Pi_{s,i_{s,q}}\ne \Pi_{q,j}$  for all $j$.

Consider the $(N+1)g$ homomorphisms $\Pi_{s,i}$, $s=0,\ldots,N$, $i=1,\ldots,g$, and the $N(N+1)g/2$ homomorphisms $\Pi_{s,i_{s,q}}-\Pi_{q,j}$, $0\le s<q\le N$, $j=1,\ldots,g$. Each of these  homomorphisms is nontrivial. Hence its kernel is a subgroup of~$H_1(\Sigma_g)$ of rank strictly smaller than~$2g$. Therefore the union~$K$ of the kernels of all $(N+1)(N+2)g/2$ homomorphisms~$\Pi_{s,i}$ and~$\Pi_{s,i_{s,q}}-\Pi_{q,j}$ does not coincide with the whole group~$H_1(\Sigma_g)$. We take for $x$ an arbitrary element of~$H_1(\Sigma_g)\setminus K$. Then $x_{s,i}\ne 0$ for all~$s$ and~$i$ and $x_{s,i_{s,q}}-x_{q,j}\ne 0$ for all $s<q$ and all~$j$. Therefore $\{x_{s,1},\ldots,x_{s,g}\}\ne \{x_{q,1},\ldots,x_{q,g}\}$ whenever $0\le s<q\le N$.
\end{proof}

Let us proceed with the proof of Proposition~\ref{propos_indep2}.
Choose a homology class $x\in H_1(\Sigma_g)$ satisfying conditions~(1) and~(2) in Lemma~\ref{lem_generic}. Then every~$x_{s,i}$ can be uniquely written in the form $x_{s,i}=n_{s,i}a_{s,i}$, where $n_{s,i}$ is a positive integer and $a_{s,i}$ is a primitive element of~$W_{s,i}$. (Warning: Generally, it is  by no means true that $a_{s,i}=(\varphi_s)_*(a_{0,i})$.)

Notice that $\{a_{s,1},\ldots,a_{s,g}\}\ne \{a_{q,1},\ldots,a_{q,g}\}$ whenever $0\le s<q\le N$. Indeed, if there existed a permutation~$\nu$ such that $a_{q,\nu(i)}=a_{s,i}$, then we would obtain that 
$$
\sum_{i=1}^g(n_{s,i}-n_{q,\nu(i)})a_{s,i}=0.
$$
But the elements $a_{s,1},\ldots,a_{s,g}$ are linearly independent, since they are nonzero and belong to pairwise different summands of the splitting~$\W_s$. Therefore we would obtain that $n_{q,\nu(i)}=n_{s,i}$, hence,  $x_{q,\nu(i)}=x_{s,i}$ for all~$i$, which would contradict condition~(2) in Lemma~\ref{lem_generic}.

For each $s$, choose a homeomorphism in the mapping class~$\varphi_s$ and denote it again by~$\varphi_s$. Then $W_{s,i}=H_1(\varphi_s(\bT_i))$, $i=1,\ldots,g$. 
Hence the primitive homology class~$a_{s,i}\in W_{s,i}$ can be represented by an oriented simple closed curve $\alpha_{s,i}$ lying in $\varphi_s(\bT_i)$. For each~$s$, the curves~$\alpha_{s,1},\ldots,\alpha_{s,g}$ are pairwise disjoint and $\sum_{i=1}^gn_{s,i}\alpha_{s,i}$ is a basic $1$-cycle for~$x$. Therefore the  oriented multicurve $M_s=\alpha_{s,1}\cup\cdots\cup\alpha_{s,g}$ belongs to~$\M_0(x)$. Since  $\{a_{s,1},\ldots,a_{s,g}\}\ne \{a_{q,1},\ldots,a_{q,g}\}$ unless $s=q$, we obtain that the multicurves $M_0,\ldots,M_N$ lie in pairwise different $\I_g$-orbits.

For each~$s$, let $h_{s,1},h_{s,2},\ldots$ be representatives of all double cosets~$\K_g\backslash\I_g/\Stab_{\I_g}(M_s)$. The Johnson homomorphism~$\tau$ identifies $\K_g\backslash\I_g/\Stab_{\I_g}(M_s)$ with $U/V_s$, where $V_s=\tau\bigl(\Stab_{\I_g}(M_s)\bigr)$. So it follows from Lemma~\ref{lem_invariant} that $\bigl|\K_g\backslash\I_g/\Stab_{\I_g}(M_s)\bigr|=\infty$. Then the oriented multicurves $h_{s,r}(M_s)$, $r=1,2,\ldots$, belong to~$\M_0(x)$ and lie in pairwise different $\K_g$-orbits but in the same $\I_g$ -orbit. Therefore all oriented multicurves $h_{s,r}(M_s)$, $s=0,\ldots,N$, $r=1,2,\ldots$, lie in pairwise different $\K_g$-orbits. By Proposition~\ref{propos_embed}, the inclusions $\Stab_{\K_g}(h_{s,r}(M_s))\subseteq \K_g$ induce the injective homomorphism
$$
\iota\colon\bigoplus_{s=0}^N\bigoplus_{r=1}^{\infty} H_{2g-3}\bigl(\Stab_{\K_g}(h_{s,r}(M_s))\bigr)\hookrightarrow H_{2g-3}(\K_g).
$$

\begin{lem}\label{lem_Qs}
For each~$s$, the subgroup
$$
\Lambda_s=\iota\left(\bigoplus_{r=1}^{\infty} H_{2g-3}\bigl(\Stab_{\K_g}(h_{s,r}(M_s))\bigr)\right)\subseteq H_{2g-3}(\K_g)
$$
is a $\Z[U]$-submodule.
\end{lem}

\begin{proof}
If $H_1$ and~$H_2$ are conjugate subgroups of a group~$G$, then the  images of the homomorphisms $H_*(H_1)\to H_*(G)$ and $H_*(H_2)\to H_*(G)$ induced by the inclusions coincide. For each element $h\in\I_g$, we have $h(M_s)=kh_{s,r}(M_s)$ for some~$r$ and some $k\in\K_g$. Hence the subgroup $\Stab_{\K_g}(h(M_s))$ is conjugate in~$\K_g$ to the subgroup $\Stab_{\K_g}(h_{s,r}(M_s))$. Therefore $\Lambda_s$ is the subgroup spanned by the images of all homomorphisms $H_{2g-3}\bigl(\Stab_{\K_g}(h(M_s))\bigr)\to H_{2g-3}(\K_g)$ induced by the inclusions, where $h$ runs over~$\I_g$. Consequently, $\Lambda_s$ is $\I_g$-invariant, i.\,e., is a $\Z[U]$-submodule.
\end{proof}

For $s=0,\ldots,N$, the multicurve~$M_s$ is contained in the union of the tori $\varphi_s(\bT_i)$, hence, is disjoint from the curves $\varphi_s(\delta_1),\ldots,\varphi_s(\delta_g),\varphi_s(\epsilon_2),\ldots,\varphi_s(\epsilon_{g-2})$. Therefore $\iA_s\in\iota\left(H_{2g-3}\bigl(\Stab_{\K_g}(M_s)\bigr)\right)\subseteq \Lambda_s$. By Lemma~\ref{lem_Qs}, we have $c_s\iA_s\in \Lambda_s$. Since $\iota$ is injective, we obtain that $\bigoplus_{s=0}^N \Lambda_s$ is the submodule of~$H_{2g-3}(\K_g)$. Therefore $c_s\iA_s=0$ for all~$s$, which completes the proof of Proposition~\ref{propos_indep2}.
\end{proof}

Theorem~\ref{theorem_Abelian} follows immediately from Proposition~\ref{propos_indep2} and Corollary~\ref{cor_indep1}.

\end{document}